\newcommand{\al}{\alpha}
\newcommand{\be}{\beta}
\newcommand{\ga}{\gamma}
\newcommand{\de}{\delta}
\newcommand{\la}{\lambda}
\newcommand{\auskommentieren}[1]{}
\newcommand{\beq}{\begin{equation}}
\newcommand{\eeq}{\end{equation}}
\newtheorem{remark}[theorem]{Remark}
\DeclareMathOperator{\sgn}{sgn}
\DeclareMathOperator{\dist}{dist}
\DeclareMathOperator{\dive}{div}
\title{Numerical approximation of level set power mean curvature flow}
\author{Axel Kr\"oner\thanks{INRIA Saclay and CMAP, \'Ecole Polytechnique, Route de Saclay, 91128~Palaiseau cedex, France, Tel. +33 (0) 1 69 33 4624, {\tt axel.kroener@inria.fr}}
\and  Eva Kr\"oner\thanks{Department of Crop Science, Georg-August-Universit\"at G\"ottingen,
B\"usgenweg 2, 37077~G\"ottingen, Germany, Tel. +49 (0) 551 3912294, {\tt ekroene@gwdg.de}}
\and Heiko Kr\"oner\thanks{Fachbereich Mathematik, Universit\"at Hamburg, Bundesstra\ss e 55, 
20146 Hamburg, Germany, Tel. +49 (0)40 42838 4942,
{\tt  heiko.kroener@uni-hamburg.de}}}
\begin{document}
\maketitle
\slugger{mms}{xxxx}{xx}{x}{x--x}

\begin{abstract}
In this paper we investigate the numerical approximation of a variant of the mean curvature flow. 
We consider the evolution of hypersurfaces with normal speed given by $H^k$, $k \ge 1$, where $H$
denotes the mean curvature.
We use a level set formulation of this flow and discretize the regularized level set equation with finite elements. 
In a previous paper we proved an a priori estimate for the approximation error between the finite element solution and the solution of the original level set equation. We obtained an upper bound for this error which is polynomial in the discretization parameter and the reciprocal regularization parameter.
The aim of the present paper is the numerical study of the behavior of the evolution and the numerical verification of certain convergence rates. We restrict the consideration to the case that the level set function depends on two variables, 
i.e. the moving hypersurfaces are curves. Furthermore, we confirm for specific initial curves and 
different values of $k$ that the flow improves the 
isoperimetrical deficit. 
\end{abstract}

\begin{keywords}
geometric evolution equations, level set formulation, viscosity solution, finite elements
\end{keywords}

\begin{AMS}53A10, 65L60, 35D40\end{AMS}

\pagestyle{myheadings}
\thispagestyle{plain}
\markboth{Numerical approximation of level set power mean curvature flow}{Axel Kr\"oner,
Eva Kr\"oner, Heiko
Kr\"oner}

\section{Introduction}

Geometric evolution equations, especially cur\-va\-ture-dependent interface motion has been studied for many 
years in both pure and applied mathematics. One application example is the evolution of soap films
and the behavior of the boundaries of oil drops on a surface of water which evolve into disks.
In material science, for example, the evolving surfaces 
might be grain boundaries in alloys which separate differing orientations of the same 
crystalline phase. In image processing, for example, one wants to identify a dark shape in a light background 
in a two-dimensional image. Therefore
a so-called snake contour is evolved so that it wraps around the shape. We refer to \cite{DDE} for a more detailed 
exposition of these applications and, e.g.,
\cite{DDE, CCCD, CV, MS} and references therein for further applications. 

The most famous case for such an interface motion is the mean curvature flow of closed $n$-dimensional hypersurfaces in Euclidean space $\mathbb{R}^{n+1}$ or as special case the curve shortening flow of closed curves in the plane. 
Under this flow the hypersurface moves in normal direction so that the normal speed equals the
(mean) curvature and a convex initial hypersurface shrinks to a round point in finite time, where in case of closed, embedded plane curves this even holds if the 
convexity assumption is left out, cf. \cite{HI2} and \cite{Gage, Gage_Hamilton, Grayson}.

Mean curvature flow can be formulated in parametric form, where the moving hypersurface is given by a parametrization over a fixed hypersurface which depends on the evolution time as variable, cf. \cite{HI2}; a special case is graphical mean curvature flow, where the hypersurface is given as the graph of a 
height function, cf. \cite{EH}. A third possibility is a phase field approach to mean curvature flow, cf. \cite{NV} and references therein,  and the fourth which will be considered in the following in more detail is to consider the PDE resulting from the level set formulation. The level set formulation is powerful because it can handle topological changes of the moving hypersurface. Level set methods were introduced by Sethian and Osher, see \cite{OS, Sethian, Giga}, and have been applied to a wide range of problems.

In the present paper we are concerned with the level set formulation of a modified version of the mean curvature flow. Instead of the mean curvature we prescribe the normal speed of the evolution to be a power $k\ge 1$ of the mean curvature and assume that the initial hypersurface has positive mean curvature. In our previous paper \cite{K} we proved a priori error estimates for a finite element approximation of this flow. The aim of the present paper is the numerical study of the behavior of the evolution and the numerical verification of certain convergence rates. We restrict the consideration to the case that the level set function depends on two variables, i.e. the moving hypersurfaces are curves. Furthermore, we confirm for specific initial curves 
and different values of $k$ that the flow improves the 'isoperimetrical deficit'. 

To specify how our flow looks like we give the following parametric formulation of this flow.
Let  $M$ be a smooth $n$-dimensional compact manifold without boundary (at the moment it is sufficient to assume only $k>0$)  and $x_0:M \rightarrow \mathbb{R}^{n+1}$ a smooth embedding such that $M_0=x_0(M)$ has positive mean curvature, then we consider a solution of the following fully nonlinear parabolic initial value problem. 
Find $T>0$ and a smooth mapping
\beq
x: [0, T) \times M \rightarrow \mathbb{R}^n
\eeq
with
\begin{equation}
\begin{aligned} 
\label{classical_pmcf}
x(0, \cdot) &= x_0, \\
\frac{d}{dt}x(t,\xi) &= -H^k\nu.
\end{aligned}
\end{equation}
Here, $H$ and $\nu$ denote the mean curvature and the outer normal of $x(t, M)$ at $x(t, \xi)$, respectively.
We call this a power mean curvature flow (PMCF).

Why is this flow interesting and how does this flow behave? This flow has been considered in a series of papers under different aspects. In \cite{S} it is shown that the flow (\ref{classical_pmcf}) exists on a maximal, finite time interval and that, approaching the final time, the surfaces contract to a point. In \cite{SS} the flow is considered in the case $k \ge 1$. It is shown that if initially the ratio of the biggest and smallest principal curvature at every point is close enough to 1, depending only on $k$ and the dimension $n$ of the hypersurfaces, then this is maintained under the flow. As a consequence the authors of \cite{SS} obtain that, when rescaling appropriately as the flow contracts to a point, the evolving surfaces converge to the unit sphere. The paper \cite{S2}  shows that for $k \ge n-1$ the flow improves a certain 'isoperimetrical difference'. As singularities may develop before the volume goes to zero, a weak level-set formulation for such flows is developed and it is shown that the monotonicity of
the isoperimetrical difference is still valid. This proves the isoperimetrical inequality for $n \le 7$. A further reason which makes this flow interesting is that mean curvature flow is used in denoising images, cf. \cite{AGLM} for such applications, and we 
expect that PMCF with the possibility to choose different values for $k \ge 1$ is an interesting alternative for this purpose. We mention the remarks in \cite{M} which state that in the case $0<k\le 1$ equation (\ref{classical_pmcf}) plays a key role in the context of image processing. 

As announced above we will perform our calculation for curves, so it is of interest to give references for this case, both of theoretical and practical nature.
In \cite{BGN, BGN2} a finite element approximation of the parametric formulation  of the flow (\ref{classical_pmcf}) in the case of curves is formulated and 
stability bounds are derived, see also \cite{BGN3}. In \cite{MSe} the evolution of plane curves driven by a nonlinear function of curvature and anisotropy is considered with a focus on the analysis of the parametric formulation of such a flow, see also \cite{Andrews, A03}. The analysis of boundaries of shapes in the context of morphological and shape image processing leads to an equation of the form (\ref{classical_pmcf}) in the case of curves. This has been introduced in \cite{AGLM,AM,ST} and we mention especially the case $k=\frac{1}{3}$ which is the so-called affine curvature equation, cf. \cite{AST} and see also (\ref{pmcf_time_dependent}) and the following text lines.
In summary one can say that our flow (\ref{classical_pmcf}) (in the case $k \ge 1$) plays
an important role for applications and has received a lot attention so far but the numerical approximation of 
its level set formulation has apart from our own previous work \cite{K}  not been analyzed yet.

We introduce our notation. The Euclidean norm of $\mathbb{R}^n$ is denoted by $|\cdot|$. For an open
subset $\Omega$ of $\mathbb{R}^n$  and $m\in \mathbb{N}^{*}$, $p\ge 1$ we denote the 
corresponding Sobolev spaces by $W^{m,p}(\Omega)$, $W^{m,p}_0(\Omega)$,
$H^m(\Omega)=W^{m,2}(\Omega)$ and $H^m_0(\Omega)=W^{m,2}_0(\Omega)$. The dual spaces are denoted by
$W^{-m,p}(\Omega)=W^{m,p}_0(\Omega)^{*}$ and the dual pairing 
by 
\beq
W^{-m,p}(\Omega) \times W^{m,p}_0(\Omega)\ni (F, \varphi) \mapsto \left<F, \varphi\right>=F\varphi
\in \mathbb{R}.
\eeq

In the following we will introduce a (stationary) level set formulation for (\ref{classical_pmcf}) and start for this purpose by recalling the (time-dependent) level set formulation for the mean curvature flow, i.e. the case $k=1$ in (\ref{classical_pmcf}). Let $M_0\subset \mathbb{R}^{n+1}$ be a given initial hypersurface and choose a continuous function $u_0: \mathbb{R}^{n+1}\rightarrow \mathbb{R}$ such that 
\beq
M_0 = \{x \in \mathbb{R}^{n+1}: u_0(x)=0\}.
\eeq
If $u : [0, \infty)\times \mathbb{R}^{n+1}\rightarrow \mathbb{R}$ is the unique viscosity solution of
\begin{equation} \label{mcf_time_dependent}
\begin{aligned} 
\frac{d}{dt}u =& |Du|  \dive  \left(\frac{Du}{|Du|}\right) 
= \mathcal{H}(Du, D^2u)
\end{aligned}
\end{equation}
in $\mathbb{R}^{n+1}\times (0, \infty)$ with $u(0, \cdot) = u_0$ in $\mathbb{R}^{n+1}$, where
\beq
\mathcal{H}(p, Y) = \sum_{i,j}\left(\de_{ij}-\frac{p_ip_j}{|p|^2}\right)Y_{ ij} 
\eeq 
for $p=(p_i)\in \mathbb{R}^n\setminus\{0\}$ and $Y=(Y_{ij})\in \mathbb{R}^{n\times n}$.
We call the family of the
\beq
M(t) = \{x \in \mathbb{R}^{n+1}: u(t,x)=0\}, \quad t >0,
\eeq
a (time dependent) level set mean curvature flow. Equation (\ref{mcf_time_dependent}) is a quasilinear, degenerate and possibly singular (if $Du=0$) parabolic equation. Existence and uniqueness of a solution for this equation is proved in \cite{CGG, CGG1, ES}.

If we include our nonlinearity $H^k$ of the mean curvature in this formulation we get instead  of (\ref{mcf_time_dependent}) the fully nonlinear, degenerate and possibly singular parabolic equation
\begin{equation} \label{pmcf_time_dependent}
\begin{aligned} 
\frac{d}{dt}u =& |Du| \left(\dive  \left(\frac{Du}{|Du|}\right)\right)^k 
=& \left(\sum_{i,j}\left(\de_{ij}-\frac{D_iuD_ju}{|Du|^2}\right)D_iD_j u\right)^k|Du|^{1-k}
\end{aligned}
\end{equation}
To our knowledge an existence proof for (\ref{pmcf_time_dependent}) is only known for the case $0<k\le 1$, cf. \cite{M}
and the references therein.
But the case under consideration in the present paper is $k>1$.  In case $k>1$ the proof 
presented in \cite{M} does not work any more because the linear growth of the elliptic part of 
the operator needed to apply classical arguments is not available. 

The time dependent formulation \ref{pmcf_time_dependent} in the $k=\frac{1}{3}$ case, i.e. the affine curvature
equation, is used for image processing, cf. \cite{AGLM, Giga}.
In \cite{M}  equation (\ref{pmcf_time_dependent}) in case $0<k\le 1$ is approximated by a family of regularized equations and 
rates of convergence of the corresponding solutions are obtained.

In the case $k=1$, of course, we have mean curvature flow, and the corresponding equation (\ref{mcf_time_dependent}) 
has been studied intensively analytically and numerically, cf., e.g.,
\cite{CarliniFalconeFerretti:2010,CL,DDE,KohnSerfaty:2006}.
We want to point out the paper \cite{D} by Deckelnick, where the solution $u^{\varepsilon}$ of a regularized version of (\ref{mcf_time_dependent}) 
is approximated by a finite difference scheme which was originally proposed by Crandall and Lions \cite{CL}. 
In Deckelnick's paper rates for the convergence of the discrete solution to the solution $u$ of the (not regularized) 
level set equation are proved. 
The total error consists of a regularization error of the form 
\beq \label{1000}
\|u-u^{\varepsilon}\|_{L^{\infty}(\Omega)} \le c_{\al}\varepsilon^{\al}
\eeq
with $\al\in (0, \frac{1}{2})$ arbitrary and $c_{\al}$ a positive constant, see \cite[Theorem 1.2]{D} for details, and a discretization error which
is a {\it polynomial} expression in the numerical parameter and the reciprocal regularization 
parameter. Furthermore, the concrete value for the convergence order of the discretization error
(and hence for the total approximation error) is very low; 
the main point here is that this 
rate is of polynomial order. 

This is not self-evident as can be seen in the paper \cite{Deckelnick_Dziuk_2001}. 
There the viscosity solution $u$ of (\ref{mcf_time_dependent}) is approximated by a solution $u_{\varepsilon}$ of the 
regularized equation and then the regularized equation is approximated by a solution $u_{\varepsilon, h}$ of a semi 
discrete problem. The regularization error is again of the form (\ref{1000}) but the 
error $u_{\varepsilon}-u_{\varepsilon, h}$ measured in a certain energy norm, cf. \cite[Theorem 6.4]{DDE}, is 
only of order $c_{\varepsilon}h$, where, and this is the important point, the constant $c_{\varepsilon}$ depends 
{\it exponentially} on $\frac{1}{\varepsilon}$. Numerical tests as written there, however, suggest that the resulting bound 
overestimates the error. In the special case of two dimensions, i.e. the moving hypersurfaces are curves,
Deckelnick and Dziuk \cite{Deckelnick_Dziuk_2001} prove $L^{\infty}$-convergence (without rates) of the 
discrete solution provided $h=h(\varepsilon)$ sufficiently small, where 'sufficiently small' is not given by an 
explicit formula or polynomial dependence.

Let us now consider the case $k \ge 1$ which is the relevant one in the present paper. To circumvent the above mentioned problem with the growth in the elliptic part of the operator if $k>1$ Schulze \cite{S} uses a stationary level set formulation at which the nonlinearity due to the exponent $k$ affects only lower order terms.
We present Schulze's stationary level set formulation of the PMCF. 

Let $\Omega \subset \mathbb{R}^{n+1}$ be open, connected and bounded having smooth boundary $\partial \Omega$ with positive mean curvature.
 Here, $\partial \Omega$ plays the role of the initial hypersurface. 
 We call the level sets $\Gamma_t =\partial\{x\in \Omega: u(x)>t\}$, $t\ge 0$, of the continuous function $0 \le u \in C^0(\bar \Omega)$ a (stationary) level set PMCF, if $u$ is a viscosity solution of 
\begin{equation}
\begin{aligned} 
 \label{levelset_pmcf}
\dive \left( \frac{D u}{|D u|} \right) =& -\frac{1}{|D u|^{\frac{1}{k}}} && \quad \text{in } \Omega \\
u=&0 && \quad \text{on } \partial \Omega.
\end{aligned} 
\end{equation}
For a definition of a viscosity solution for this equation we refer to \cite[Section 2]{K}.
If $u$ is smooth in a neighborhood of $x\in \Omega$ with non vanishing gradient and satisfies in this neighborhood 
(\ref{levelset_pmcf}), then the level set $\{u=u(x)|x\in \Omega\}$ moves locally at $x$  according to (\ref{classical_pmcf}).
Using elliptic regularization of level set PMCF we obtain the equation
 \begin{equation}
\begin{aligned} \label{regularized_levelset_pmcf}
\dive \left(\frac{D u^{\varepsilon}}{\sqrt{\varepsilon^2+|D u^{\varepsilon}|^2}}\right) &= -(\varepsilon^2
+|D u^{\varepsilon}|^2)^{-\frac{1}{2k}} \quad &&\text{in } \Omega, \\
u^{\varepsilon}&= 0\quad  &&\text{on } \partial \Omega,
\end{aligned} 
\end{equation}
which has a unique smooth solution $u^{\varepsilon}$ for sufficiently small $\varepsilon >0$, cf. \cite[Section 4]{S}; moreover, there is $c_0>0$ such that
\beq \label{12}
\|u^{\varepsilon}\|_{C^1{(\bar \Omega})} \le c_0
\eeq
uniformly in $\varepsilon$ and (for a subsequence) 
\beq \label{15}
u^{\varepsilon} \rightarrow u \in C^{0,1}(\bar \Omega)
\eeq
in $C^0(\bar \Omega)$. We call $u$ a weak solution of  (\ref{levelset_pmcf}), which is unique for $n\le 6$. 
All the above facts are proved in \cite[Section 4]{S} under the assumption that $k \ge 1$. 
A weak solution of (\ref{levelset_pmcf}) satisfies (\ref{levelset_pmcf})  in the viscosity sense, cf. Section \cite[Section 2]{K}.
Furthermore Schulze's existence result is restricted to the case $k \ge 1$.

What looks as a disadvantage at first glance, namely the fact that our level set function does not depend on the time explicitly and hence no explicit 
Euler method is applicable (as for example in Deckelnick's paper \cite{D}), 
has the advantage that we have a divergence structure for the elliptic part of the operator which would not be the case if we would use the time dependent level set formulation (\ref{pmcf_time_dependent}) (in addition we would lack a proof of existence of a solution).

To our knowledge our previous paper \cite{K} is the only numerical analysis result for Schulze's level set 
formulation so far. 
In \cite{K} we proved an explicit rate for the convergence of the solution $u^{\varepsilon}$ of 
(\ref{regularized_levelset_pmcf}) to the solution $u$ of equation (\ref{levelset_pmcf}) which depends on $k$, 
see Section \ref{50}, where we recall the result. 
Using the divergence structure of the elliptic part of (\ref{regularized_levelset_pmcf}) we proved 
existence of a finite element approximation $u^{\varepsilon}_h$ of $u^{\varepsilon}$ and an approximation 
rate. 
Summarized we have a total approximation error 
\beq \label{100}
u-u^{\varepsilon}_h = (u - u^{\varepsilon} )+ (u^{\varepsilon} -u^{\varepsilon}_h)
\eeq 
which consists of the regularization error (first bracket in equation (\ref{100})) and the discretization error 
(second bracket in equation (\ref{100})).
We obtained a {\it polynomial} rate in $\frac{1}{\varepsilon}$ and $h$ for the total
approximation error provided the discretization parameter is sufficiently small compared with the 
regularization parameter (the coupling between the discretization and the regularization parameter is also of polynomial order). 
The order of convergence is polynomial (in contrast, e.g., to \cite[Theorem 6.4]{DDE}, where the authors 
obtain exponential order of convergence) and comparable to  the one obtained in \cite{D}. 
In both cases despite from being of polynomial order the precise order is rather of theoretical value. 
As in the remarks following \cite[Theorem 6.4]{DDE} stating that experiments indicate that the proven approximation 
error overestimates the real error, we have a similar behavior for our situation, 
cf. Sections \ref{51}, 
\ref{50}, and \ref{52}.

Furthermore, we validate for some examples that the flow improves (i.e. decreases) the isoperimetrical deficit 
\beq \label{isoperimetric_deficit}
A(t)^{\frac{n+1}{n}}-c_{n+1} V(t),
\eeq
where $A(t)$ denotes the $n$-dimensional volume of the evolving hypersurface, $V(t)$ the $(n+1)$-dimensional 
volume of the enclosed subset of $\mathbb{R}^{n+1}$ and $c_{n+1}$ the Euclidean isoperimetrical constant,
cf. \cite{S} for a proof of this property. The isoperimetrical deficit  (\ref{isoperimetric_deficit}) is nonnegative 
and zero if and only if the hypersurface is a sphere.
Since we restrict ourselves to the case $n=1$ 
the evolving hypersurfaces are curves and the variables in the isoperimetrical deficit 
become arc length and enclosed area. And there holds $c_2=4 \pi$. 
Geometrically more interesting is the case $n \ge 2$ because then one can have non convex initial 
hypersurfaces with positive mean curvature which develop topological changes under the flow. 
To validate our level set ansatz with respect to its convergence behavior and numerical properties
restricting ourselves to the curve case seems to be a reasonable and legitimate technical simplification.

The paper is organized as follows. In Section \ref{51} we prove an error estimate for the discretization error
and validate it numerically. In Section \ref{50} we recall and calculate in detail an error estimate
for the regularization error from our previous paper \cite{K} and provide numerical examples. In Section \ref{52} we provide numerical examples 
for the total approximation error. In Section \ref{70} we illustrate with an example the influence of the 
exponent $k$ on the behavior of the flow.

\section{Discretization error} \label{51}
In this section we assume that the space dimension $n+1$ is 2 or 3 and that $\Omega$ is convex.
The latter is only a restriction if $n+1=3$ since $\partial \Omega$ has positive mean curvature 
by our assumptions in the introduction. We fix $\varepsilon>0$ at a small value and approximate the solution $u^{\varepsilon}$ 
of equation 
(\ref{regularized_levelset_pmcf}) by a finite element solution $u^{\varepsilon}_h$ 
which seems to be an appropriate method in view of 
the divergence structure of the operator. The goal is to analyze the discretization error 
$u^{\varepsilon}-u^{\varepsilon}_h$. 

Let $(T_h, \Omega_h)$ be a quasi-uniform triangulation of $\Omega$ with mesh size 
$0<h<h_0$, $h_0$ sufficiently small, and $V_h \subset H^1(\Omega_h)$ 
the finite element space given by
\beq 
V_h = \left\{v\in C^0(\bar \Omega_h): v_{|\partial \Omega_h}=0, \ v_{|T} \text{ linear }
\forall T\in T_h\right\}.
\eeq
In view of the convexity of $\Omega$ there
holds $\Omega_h \subset \Omega$. A function $u_h \in V_h$ will be also considered as a function on $\Omega$ by
extending it by zero in $\Omega\setminus \Omega_h$. Then $v_h\in H^1(\Omega)$. Our variational formulation is given as in our previous paper \cite{K} by
 \begin{equation}
\begin{aligned} \label{FE_levelset_pmcf}
\int_{\Omega_h}\frac{\left<D u^{\varepsilon}_h,D v_h\right>}{\sqrt{\varepsilon^2+|D u^{\varepsilon}_h|^2}} dx=& 
\int_{\Omega_h}(\varepsilon^2+|D u^{\varepsilon}_h|^2)^{-\frac{1}{2k}} v_hdx\quad \forall \ v_h \in V_h. 
\end{aligned} 
\end{equation}
For formal reason we might consider  boundary tetrahedrons (boundary triangles in case $d=2$) 
to be extended to a boundary tetrahedron with one 'curved face'. 
Therefore we will replace a boundary element $T \in T_h$ (i.e. $n+1$ vertexes of $T$ lie on 
$\partial \Omega$) by $\tilde T=T\cup B$ with
\beq
B = \{tp+(1-t)Pp\ | \ 0 \le t \le 1,  p \in F\},
\eeq
where $F$ is the boundary face of $T$, i.e. $n+1$ vertexes of $F$ lie on $\partial \Omega$, 
and $Pp$ is the unique minimizer of $\dist(p, \cdot)_{|\partial \Omega}$. 
We denote the resulting triangulation by $\tilde T_h$. 
This leaves the space of finite element functions we use (namely $V_h$) unchanged.
Note, that the boundary strip $\Omega \setminus \Omega_h$ has measure $O(h^2)$.

A similar equation as (\ref{FE_levelset_pmcf}) is the 
stationary level set formulation for the inverse mean curvature flow which is used in \cite{FNP}.
There also a total approximation error, a discretization error 
and a regularization error  appears. Furthermore, a rate for the discretization error ($O(h)$ for the $H^1$-error and $O(h^2)$ 
for the $L^2$-error) is proved. But the dependence of the constants on the regularization parameter 
which appear in these error estimates is not analyzed theoretically. 
In contrast to \cite{K} in \cite{FNP} no theoretical estimate for the regularization error 
(and hence for the total approximation error) is given, and such a rate seems to be an open 
problem so far, cf. \cite[Remark~4]{FNP}. But this issue is addressed numerically in \cite{FNP} and 
calculations suggest that the regularization error is $O(\varepsilon)$, where $\varepsilon$ here also denotes 
the regularization parameter.

We remark that when we considered the discretization in our previous paper, see \cite[Section 6]{K},
the space $V_h$ consisted of continuous functions on $\Omega_h$
 which are piecewise polynomials of degree $\le 2$ (and not linear as here) and we assumed 
 that $\Omega \subset \mathbb{R}^2$
 and existence of
 a solution of (\ref{FE_levelset_pmcf}) in this case was shown. 
 The reason for this is that in  \cite[Section 6]{K} 
 error bounds for the discretization error are proved which contain the dependence of $\varepsilon$ explicitly. Therefore 
 an estimate for the norm of the inverse of 
 $L_{\varepsilon}$ and its dual $L_{\varepsilon}^{*}$, where $L_{\varepsilon}$
 is the derivative of the regularized differential operator, see (\ref{2000}) for a definition,  
 is calculated via the intermediate step of some rather technical sup-norm estimates and inverse inequalities 
 which make it necessary to consider higher order elements. 
 Since these $\varepsilon$-dependencies do not play a role for the discretization error under consideration 
 in this section we present the proof for the present case in easier form and also for
  $W^{1,p}$-norms in Theorem \ref{60} with general $p\ge n+1$ (contrary to \cite[Section 6]{K}, where $p<4$
  is assumed) which is necessary 
  to prove an optimal $L^2$-error
  estimate, cf.~Theorem \ref{87}.
  
  We start with a definition and properties of the linear operator $L_{\epsilon}$ and its dual.
  
  Let $p>1$. We define for $\varepsilon>0$ and $z \in \mathbb{R}^n$
\beq
|z|_{\varepsilon}:=f_{\varepsilon}(z):=\sqrt{|z|^2+\varepsilon^2}
\eeq
and denote derivatives of $f_{\varepsilon}$ with respect to $z^i$ by $D_{z^i}f_{\varepsilon}$.
There holds
\beq
D_{z^i}f_{\varepsilon}(z)= \frac{z_i}{|z|_{\varepsilon}}, \quad 
D_{z^i}D_{z^j}f_{\varepsilon}(z)=\frac{\de_{ij}}{|z|_{\varepsilon}} - \frac{z_iz_j}{|z|^3_{\varepsilon}}.
\eeq
We define the operator $\Phi_{\varepsilon}$ by
\beq
 \Phi_{\varepsilon}: W^{1,p}_0(\Omega)\rightarrow W^{-1,p^{*}}(\Omega), \quad \Phi_{\varepsilon}(v)=- D_i \left(\frac{D_i v}{|D v|_{\varepsilon}}\right) 
 - \frac{1}{|D v|_{\varepsilon}^{\frac{1}{k}}}, 
\eeq
where $\frac{1}{p}+\frac{1}{p^{*}}=1$, so that (\ref{regularized_levelset_pmcf}) can be written as
\beq
\Phi_{\varepsilon}(u^{\varepsilon})=0.
\eeq
We denote the derivative of $\Phi_{\varepsilon}$ in $u^{\varepsilon}$ by
\beq \label{2000}
L_{\varepsilon}:=D\Phi_{\varepsilon}(u^{\varepsilon})
\eeq
and have for all $\varphi \in W^{1,p}_0(\Omega)$ that
\begin{equation}
\begin{aligned}
L_{\varepsilon}\varphi &= -D_i\left(D_{z^i}D_{z^j}f_{\varepsilon}(D u^{\varepsilon})D_j \varphi\right)
+\frac{1}{k} f_{\varepsilon}(D u^{\varepsilon})^{-1-\frac{1}{k}}D_{z^j}f_{\varepsilon}(D u^{\varepsilon})D_j \varphi \\
& =: -D_i(a^{ij}D_j\varphi) + b^iD_i \varphi,
\end{aligned}
\end{equation}
where we use the convention to sum over repeated indices.
The coefficients $a^{ij}$ and $b^i$ are in $C^{\infty}(\bar \Omega)$. Note, that the estimate 
(\ref{12}) is not available for higher order derivatives of $u^{\varepsilon}$
but since we fix $\varepsilon$ in the present section, this does not have an effect
on the following considerations.

The linear operator
\beq
L_{\epsilon}: W^{1,p}_0(\Omega)\rightarrow W^{-1,p^{*}}(\Omega)
\eeq
and its adjoint operator $L_{\epsilon}^{*}$ are topological isomorphism, cf. Corollary \ref{5004}
in the Appendix.

We get from  \cite[Theorem 8.5.3]{BS} for $L=L_{\epsilon}$ or $L=L_{\epsilon}^{*}$ and $F\in W^{-1,p^{*}}(\Omega)$ that there is a unique solution $u_h\in V_h$ of
\beq
\left<Lu_h, \varphi_h\right> = F\varphi_h \quad \forall \varphi_h \in V_h,
\eeq
where $u\in H^1(\Omega)$ is the unique solution of $Lu=F$
and there holds the estimate
\beq
\|u_h\|_{W^{1,p}(\Omega)}+ \|u-u_h\|_{W^{1,p}(\Omega)} \le c\|u\|_{W^{1,p}(\Omega)}. 
\eeq
Furthermore, if $F\in L^p(\Omega)$ we have
\beq
\|u-u_h\|_{W^{1,p}(\Omega)} +h\|u-u_h\|_{L^p(\Omega)} \le ch^2 \|F\|_{L^p(\Omega)}.
\eeq

\begin{remark} \rm
Note, that we actually used the assertion of \cite[Theorem 8.5.3]{BS} with slightly different assumptions. The difference from the assumptions we need to the one assumed in \cite[Theorem 8.5.3]{BS} are as follows.
 \begin{enumerate}[(i)]
 \item We assume a right-hand side $F\in W^{-1,p^{*}}(\Omega)$ (instead $F\in L^p(\Omega)$).
 \item We consider the equation on $\Omega$ (instead of a polygonal domain) and use as discretization the triple $(\tilde T_h, \Omega, V_h)$.
 \end{enumerate}
 \end{remark}

There holds the following theorem.
\begin{theorem} \label{60}
For every $p>n+1$ and small $h>0$ there exists a constant $0<c=c(\|u^{\varepsilon}\|_{W^{2, 2}(\Omega)},p)$ such that (\ref{FE_levelset_pmcf}) has a solution $u^{\varepsilon}_h \in V_h$ satisfying
\beq \label{1002}
\|u^{\varepsilon}-u^{\varepsilon}_h\|_{W^{1,p}(\Omega)} \le c h.
\eeq
This solution is unique in a small $W^{1,p}$-neighborhood of $u^{\varepsilon}$ in $V_h$.
\end{theorem}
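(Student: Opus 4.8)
The plan is to construct $u^{\varepsilon}_h$ by a Banach fixed-point (Newton–Kantorovich) argument centered at the exact solution $u^{\varepsilon}$, using the operator $L_{\varepsilon}=D\Phi_{\varepsilon}(u^{\varepsilon})$ from (\ref{2000}) as the linearization of the nonlinear residual $\Phi_{\varepsilon}$ and leaning on the quasi-optimal $W^{1,p}$ estimates for the Galerkin discretizations of $L_{\varepsilon}$ and $L_{\varepsilon}^{*}$ that are already available from \cite[Theorem~8.5.3]{BS} and Corollary \ref{5004}. First I would convert the continuous isomorphism property of $L_{\varepsilon}$ and $L_{\varepsilon}^{*}$, together with the discrete well-posedness from \cite[Theorem~8.5.3]{BS}, into a \emph{uniform discrete inf-sup condition}: there are $h_0,\beta>0$ so that for all $0<h<h_0$
\beq
\sup_{\varphi_h \in V_h \setminus \{0\}} \frac{\langle L_{\varepsilon} w_h, \varphi_h\rangle}{\|\varphi_h\|_{W^{1,p^{*}}(\Omega)}} \ge \beta \|w_h\|_{W^{1,p}(\Omega)} \quad \text{for all } w_h \in V_h .
\eeq
In particular the discrete operator is invertible with an $h$-independent bound on its inverse, which lets me define the Galerkin projection $R_h u^{\varepsilon}\in V_h$ of $u^{\varepsilon}$ by $\langle L_{\varepsilon} R_h u^{\varepsilon},\varphi_h\rangle=\langle L_{\varepsilon} u^{\varepsilon},\varphi_h\rangle$ for all $\varphi_h\in V_h$. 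Quasi-optimality combined with the standard linear interpolation estimate then yields $\|u^{\varepsilon}-R_h u^{\varepsilon}\|_{W^{1,p}(\Omega)}\le c\,h$, with $c$ controlled by a (finite, since $u^{\varepsilon}$ is smooth for fixed $\varepsilon$) norm of $u^{\varepsilon}$.

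Next I would set up the fixed-point map on the ball $B_{\rho}:=\{v_h\in V_h:\|v_h-R_h u^{\varepsilon}\|_{W^{1,p}(\Omega)}\le\rho\}$ with a radius $\rho$ of order $h$. For $v_h\in B_{\rho}$ define $\mathcal{T}(v_h)\in V_h$ as the solution of the linear discrete problem $\langle L_{\varepsilon}\mathcal{T}(v_h),\varphi_h\rangle=\langle L_{\varepsilon}v_h,\varphi_h\rangle-\langle\Phi_{\varepsilon}(v_h),\varphi_h\rangle$ for all $\varphi_h\in V_h$, which is solvable by the discrete inf-sup condition; a fixed point $v_h=\mathcal{T}(v_h)$ is exactly a solution of (\ref{FE_levelset_pmcf}). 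Using $\Phi_{\varepsilon}(u^{\varepsilon})=0$ and subtracting the definition of $R_h u^{\varepsilon}$ gives $\langle L_{\varepsilon}(\mathcal{T}(v_h)-R_h u^{\varepsilon}),\varphi_h\rangle=-\langle\mathcal{N}(v_h),\varphi_h\rangle$, where $\mathcal{N}(v_h):=\Phi_{\varepsilon}(v_h)-\Phi_{\varepsilon}(u^{\varepsilon})-L_{\varepsilon}(v_h-u^{\varepsilon})$ is the second-order Taylor remainder of $\Phi_{\varepsilon}$.

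The hard part will be the nonlinear remainder estimate and its interaction with the inverse inequality; this is where $p>n+1$ is forced. Since $f_{\varepsilon}\ge\varepsilon>0$, the map $\Phi_{\varepsilon}$ is smooth with second derivatives bounded on $W^{1,\infty}$-bounded sets, so one expects a bound of the form $\|\mathcal{N}(v_h)\|_{W^{-1,p^{*}}(\Omega)}\le c\,\|v_h-u^{\varepsilon}\|_{W^{1,\infty}(\Omega)}\|v_h-u^{\varepsilon}\|_{W^{1,p}(\Omega)}$. To control the $W^{1,\infty}$-factor for discrete $v_h\in B_{\rho}$ I would invoke the inverse inequality $\|D(v_h-R_h u^{\varepsilon})\|_{L^{\infty}(\Omega)}\le c\,h^{-(n+1)/p}\|D(v_h-R_h u^{\varepsilon})\|_{L^{p}(\Omega)}\le c\,h^{-(n+1)/p}\rho$; with $\rho\sim h$ this is of order $h^{\,1-(n+1)/p}$, which tends to $0$ as $h\to0$ \emph{precisely because} $p>n+1$. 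Hence for $h$ small every $v_h\in B_{\rho}$ lies in a fixed $W^{1,\infty}$-neighborhood of $u^{\varepsilon}$, the coefficients $a^{ij},b^i$ stay uniformly bounded and away from degeneracy, and $\|v_h-u^{\varepsilon}\|_{W^{1,\infty}(\Omega)}$ is uniformly small.

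Finally I would combine the discrete inf-sup bound with the remainder estimate to obtain $\|\mathcal{T}(v_h)-R_h u^{\varepsilon}\|_{W^{1,p}(\Omega)}\le\beta^{-1}\|\mathcal{N}(v_h)\|_{W^{-1,p^{*}}(\Omega)}\le\rho/2$ for $h$ small, giving the self-mapping property $\mathcal{T}:B_{\rho}\to B_{\rho}$; an analogous Lipschitz estimate for $\mathcal{N}$ on $B_{\rho}$ yields a contraction constant strictly less than one, again for $h$ small. Banach's fixed-point theorem then produces a \emph{unique} $u^{\varepsilon}_h\in B_{\rho}$ solving (\ref{FE_levelset_pmcf}), and the triangle inequality $\|u^{\varepsilon}-u^{\varepsilon}_h\|_{W^{1,p}(\Omega)}\le\|u^{\varepsilon}-R_h u^{\varepsilon}\|_{W^{1,p}(\Omega)}+\rho\le c\,h$ delivers (\ref{1002}); the asserted local uniqueness in a small $W^{1,p}$-neighborhood is exactly the uniqueness of the fixed point. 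The genuine obstacle throughout is keeping the discrete iterates inside a controlled $W^{1,\infty}$-neighborhood of $u^{\varepsilon}$ where the nonlinear structure is tame, which is what ties the admissible integrability $p>n+1$ to the inverse-estimate exponent $(n+1)/p$.
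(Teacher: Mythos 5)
Your proposal is correct and follows essentially the same route as the paper: a Banach (Newton--Kantorovich) fixed-point argument in $V_h$ built on the frozen linearization $L_{\varepsilon}$, its discrete solvability from \cite[Theorem~8.5.3]{BS} and Corollary \ref{5004}, and the inverse inequality with exponent $(n+1)/p$ that makes $p>n+1$ necessary to keep the iterates in a controlled $W^{1,\infty}$-neighborhood of $u^{\varepsilon}$. The only differences are bookkeeping choices --- you anchor the ball at the Ritz projection $R_h u^{\varepsilon}$ with radius $\rho\sim h$ and use the quadratic Taylor remainder, whereas the paper centers the ball at $u^{\varepsilon}$ (entered via the interpolant $I_h u^{\varepsilon}$) with radius $\rho=h^{\lambda}$, $\tfrac{n+1}{p}<\lambda<1$, and uses first-order mean value estimates.
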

\begin{proof}
The proof of this lemma  is adapted from \cite[Section 6]{K}, where the result is proved for $p<4$
and quadratic finite elements. 

We set
\beq
\bar B^h_{\rho} = \{v_h \in V_h: \|u^{\varepsilon}-v_h\|_{W^{1,p}(\Omega)}\le \rho\},
\eeq 
where we choose
\beq \label{71}
\rho = h^{\la} 
\eeq
for an arbitrary and now fixed $\frac{n+1}{p}<\lambda<1$.

We will obtain $u_h$ as the unique fixed point in $ \bar B^h_{\rho}$ of the operator
$T: V_h\rightarrow V_h$ with
\beq \label{204}
L_{\varepsilon} (w_h-Tw_h) = \Phi_{\varepsilon}(w_h), \quad w_h \in V_h.
\eeq
We show that $\bar B^h_{\rho}\neq \emptyset$, that $T$ is a contraction and that $T(\bar B^h_{\rho})\subset \bar B^h_{\rho}$.

(i) Let $I_hu^{\varepsilon}$ be the interpolation of $u^{\varepsilon}$, i.e. the continuous piecewise linear function on $\Omega_h$ which is equal to $u^{\varepsilon}$ at all nodes of $\Omega_h$. We extend 
$I_hu^{\varepsilon}$ by zero to a function on $\Omega$.
In view of
\beq
\|I_hu^{\epsilon} - u^{\epsilon} \|_{W^{1,p}(\Omega)} \le ch
\eeq
we have $I_h u^{\varepsilon} \in \bar B^h_{\rho}$ for small $h$.

(ii) Let $v_h, w_h \in \bar B^{h}_{\rho}$, $\xi_h = v_h-w_h$ then using (\ref{204}) we conclude

\begin{equation} \label{501}
 \begin{aligned}
L_{\varepsilon} (Tv_h-Tw_h)  &= L_{\varepsilon}\xi_h + \Phi_{\varepsilon}(w_h)-\Phi_{\varepsilon}(v_h) \\
&= (L_{\varepsilon}-D\Phi_{\varepsilon}(v_h + \Theta \xi_h))\xi_h \\
&=: F
\end{aligned}
\end{equation}
with a $\Theta \in (0,1)$. In order to estimate $\|F\|_{W^{-1,p^{*}}}(\Omega)$ which leads to
an estimate of $\|T{v_h}-Tw_h\|_{W^{1,p}(\Omega)}$ in view of Corollary \ref{5004}
we choose $\psi \in W^{1,p^{*}}_0(\Omega)$ with $\|\psi\|_{W^{1,p^{*}}(\Omega)}\le 1$ and 
estimate 
$
\left<F, \psi\right>.
$
To do so we use a mean value theorem for which we need the following auxiliary estimate \begin{equation}
 \begin{aligned}
 \|D u^{\varepsilon}-(Dv_h &+ \Theta D\xi_h)\|_{L^{\infty}(\Omega)} \\
 & \le \|Du^{\varepsilon}-DI_hu^{\varepsilon}
 \|_{L^{\infty}(\Omega)} + \|D I_h u^{\varepsilon}-D\tilde v_h\|_{L^{\infty}(\Omega)} \\
 & \le c h + c\rho h^{-\frac{n+1}{p}},
 \end{aligned}
\end{equation}
where $\tilde v_h=v_h + \Theta \xi_h\in \bar B_{\rho}^h$ and where we used an inverse estimate.
The resulting estimate implies
\begin{equation}
\begin{aligned}
\|T{v_h}-Tw_h\|_{W^{1,p}(\Omega)} \le&  c (h + \rho h^{-\frac{n+1}{p}})\|\xi_h\|_{W^{1,p}(\Omega)} \\
\le & \frac{1}{4}\|\xi_h\|_{W^{1,p}(\Omega)} 
\end{aligned}
\end{equation}
for small $h$.

(iii) Let $w_h \in \bar B_{\rho}^h$. There holds
\begin{equation}
\begin{aligned}
 \|Tw_h -&u^{\varepsilon}\|_{W^{1,p}(\Omega)} \\
  \le& \|Tw_h-TI_h u^{\varepsilon}\|_{W^{1,p}(\Omega)}
  + \|TI_h u^{\varepsilon}-I_h u^{\varepsilon}\|_{W^{1,p}(\Omega)} \\
  & + 
  \|I_h u^{\varepsilon}-u^{\varepsilon}\|_{W^{1,p}(\Omega)} \\
  \le & \frac{\rho}{2} + \|TI_h u^{\varepsilon}-I_h u^{\varepsilon}\|_{W^{1,p}(\Omega)} + c h
\end{aligned}
\end{equation}
It remains to estimate the norm on the right-hand side. There holds
\begin{equation}
 \begin{aligned}
 \|TI_h u^{\varepsilon}-I_h u^{\varepsilon}\|_{W^{1,p}(\Omega)} \le& c \|\Phi_{\varepsilon}(I_h u^{\varepsilon})\|
 _{W^{-1,p^{*}}(\Omega)} \\
 = &c\|\Phi_{\varepsilon}(I_h u^{\varepsilon})-\Phi_{\varepsilon}(u^{\varepsilon})\| 
 _{W^{-1,p^{*}}(\Omega)} \\
 \le & c h
 \end{aligned}
\end{equation}
again by a mean value theorem estimate. In view of (\ref{71}) there holds
\beq
T(\bar B_{\rho}^h) \subset \bar B_{\rho}^h.
\eeq
\end{proof}


In the following theorem we improve the $L^p$-error estimate of Theorem \ref{60}, therefore we use a 
duality 
argument as in \cite{FNP}.
\begin{theorem} \label{87}
For $p>n+1$ there holds
\beq \label{88}
\|u^{\varepsilon}-u^{\varepsilon}_h\|_{L^p(\Omega)} \le ch^2
\eeq
with $c=c(\|u^{\varepsilon}\|_{W^{2, 2}(\Omega)},p)>0$.
\end{theorem}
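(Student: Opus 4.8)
The plan is to run a duality (Aubin--Nitsche) argument built on the \emph{smooth} linearization $L_{\varepsilon}$ rather than on the averaged operator from the mean value theorem, so that the dual problem enjoys full elliptic regularity, while the nonlinearity is pushed into a correction term that turns out to be quadratic in the error. Set $e:=u^{\varepsilon}-u^{\varepsilon}_h\in W^{1,p}_0(\Omega)$. Since $\|e\|_{L^p(\Omega)}=\sup\{\int_{\Omega}e\,g\,dx:\ g\in L^{p^{*}}(\Omega),\ \|g\|_{L^{p^{*}}(\Omega)}\le 1\}$ with $\tfrac1p+\tfrac1{p^{*}}=1$, I fix such a $g$ and solve the adjoint problem: find $\phi\in W^{1,p^{*}}_0(\Omega)$ with $\langle L_{\varepsilon}v,\phi\rangle=\int_{\Omega}v\,g\,dx$ for all $v\in W^{1,p}_0(\Omega)$. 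By Corollary~\ref{5004} and the linear estimates recalled before Theorem~\ref{60} (applied to $L_{\varepsilon}^{*}$, which again has smooth coefficients, with exponent $p^{*}$), this problem has a unique solution obeying $\|\phi\|_{W^{2,p^{*}}(\Omega)}\le c\|g\|_{L^{p^{*}}(\Omega)}$, whose finite element approximation $\phi_h\in V_h$ satisfies $\|\phi-\phi_h\|_{W^{1,p^{*}}(\Omega)}\le ch\|g\|_{L^{p^{*}}(\Omega)}$ together with the stability bound $\|\phi_h\|_{W^{1,p^{*}}(\Omega)}\le c\|g\|_{L^{p^{*}}(\Omega)}$.

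Next I would record the nonlinear Galerkin orthogonality. A mean value theorem as in the proof of Theorem~\ref{60} gives $\Phi_{\varepsilon}(u^{\varepsilon})-\Phi_{\varepsilon}(u^{\varepsilon}_h)=\tilde L e$, where $\tilde L:=\int_0^1 D\Phi_{\varepsilon}\bigl(u^{\varepsilon}_h+te\bigr)\,dt$ is again a linear operator of the form $\varphi\mapsto -D_i(\tilde a^{ij}D_j\varphi)+\tilde b^i D_i\varphi$. Because $\Phi_{\varepsilon}(u^{\varepsilon})=0$ and $u^{\varepsilon}_h$ solves (\ref{FE_levelset_pmcf}), testing against $v_h\in V_h$ yields $\langle \tilde L e,v_h\rangle=0$ for all $v_h\in V_h$. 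Choosing the defining test function $v=e$ in the dual problem and splitting $\phi=(\phi-\phi_h)+\phi_h$, the orthogonality lets me replace $\langle L_{\varepsilon}e,\phi_h\rangle$ by $\langle(L_{\varepsilon}-\tilde L)e,\phi_h\rangle$, so that
\[
\int_{\Omega}e\,g\,dx=\langle L_{\varepsilon}e,\phi-\phi_h\rangle+\bigl\langle (L_{\varepsilon}-\tilde L)e,\phi_h\bigr\rangle.
\]

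The first term is the easy one: the coefficients $a^{ij},b^i$ of $L_{\varepsilon}$ are bounded ($\varepsilon$ is fixed), so Hölder together with the dual estimate and Theorem~\ref{60} give $|\langle L_{\varepsilon}e,\phi-\phi_h\rangle|\le c\|e\|_{W^{1,p}(\Omega)}\|\phi-\phi_h\|_{W^{1,p^{*}}(\Omega)}\le c\,h\cdot h\,\|g\|_{L^{p^{*}}(\Omega)}$. For the second term the extra power of $h$ must come from the quadratic dependence on $e$. The crucial point is that for fixed $\varepsilon>0$ the function $f_{\varepsilon}$ is smooth on all of $\mathbb{R}^n$ with globally bounded derivatives, so $D_{z^i}D_{z^j}f_{\varepsilon}$ and the coefficient $\tfrac1k f_{\varepsilon}^{-1-\frac1k}D_{z^j}f_{\varepsilon}$ are globally Lipschitz; since $|Du^{\varepsilon}-D(u^{\varepsilon}_h+te)|=(1-t)|De|\le|De|$, this forces the pointwise bounds $|a^{ij}-\tilde a^{ij}|\le c|De|$ and $|b^i-\tilde b^i|\le c|De|$. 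Hence the integrand carries two factors of $De$, and Hölder with exponents $2p,2p,p^{*}$ (note $\tfrac1{2p}+\tfrac1{2p}+\tfrac1{p^{*}}=1$) yields
\[
\bigl|\langle (L_{\varepsilon}-\tilde L)e,\phi_h\rangle\bigr|\le c\|De\|_{L^{2p}(\Omega)}^2\bigl(\|D\phi_h\|_{L^{p^{*}}(\Omega)}+\|\phi_h\|_{L^{p^{*}}(\Omega)}\bigr)\le c\,h^2\,\|g\|_{L^{p^{*}}(\Omega)},
\]
where I invoke Theorem~\ref{60} a second time, now with the exponent $2p>n+1$ for the \emph{same} discrete solution, to get $\|De\|_{L^{2p}(\Omega)}\le ch$, and use the stability bound for $\phi_h$. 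Combining the two displays and taking the supremum over admissible $g$ gives (\ref{88}).

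The step I expect to be the main obstacle is precisely this nonlinear remainder $\langle (L_{\varepsilon}-\tilde L)e,\phi_h\rangle$. One must verify the global Lipschitz dependence of the coefficients on the gradient — here the global smoothness of $f_{\varepsilon}$ for fixed $\varepsilon$ does the work, which is important since no $L^{\infty}$-bound on $Du^{\varepsilon}_h$ is available — and then exploit the resulting quadratic structure together with the availability of the $W^{1,q}$-estimate of Theorem~\ref{60} for \emph{every} $q>n+1$, in particular for $q=2p$, in order to extract the full factor $h^2$.
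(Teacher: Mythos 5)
Your proof is correct and takes essentially the same route as the paper: a duality argument with the adjoint of the frozen linearization $L_{\varepsilon}$, Galerkin orthogonality to trade the linearized operator for the difference between the frozen and the averaged (mean-value) coefficients, the pointwise Lipschitz bound $|A^{\varepsilon}_h-\bar A^{\varepsilon}_h|\le c|\nabla e|$ producing a term quadratic in $\nabla e$, and H\"older with exponents $2p,2p,p^{*}$ combined with Theorem \ref{60} at exponent $2p$ to extract $h^{2}$. The only (cosmetic) differences are that you normalize the dual datum $g$ instead of taking $|e^{\varepsilon}_h|^{p-1}\sgn(e^{\varepsilon}_h)$, and that you explicitly split $\phi=(\phi-\phi_h)+\phi_h$ and bound $\langle L_{\varepsilon}e,\phi-\phi_h\rangle$, a step that the paper's display (\ref{7001}) passes over silently.
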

\begin{proof}
 From the definitions of $u^{\varepsilon}$ and $u^{\varepsilon}_h$ we get for all $\varphi_h \in V_h$
 \begin{equation}
\begin{aligned}
 \int_{\Omega}\left(\frac{\nabla u^{\varepsilon}}{|\nabla u^{\varepsilon}|_{\varepsilon}}
 -\frac{\nabla u^{\varepsilon}_h}{|\nabla u^{\varepsilon}_h|_{\varepsilon}}\right) \cdot \nabla
 \varphi_h dx 
 +\int_{\Omega}\left(|\nabla u^{\varepsilon}|^{\frac{1}{k}}_{\varepsilon}
 -|\nabla u^{\varepsilon}_h|^{\frac{1}{k}}_{\varepsilon}\right)
\varphi_h dx =0
 \end{aligned}
 \end{equation}
 This equation can be written as
 \beq \label{72}
 \int_{\Omega}\left(A^{\varepsilon}_h\nabla e^{\varepsilon}_h\right) \cdot \nabla \varphi_h dx + 
 \int_{\Omega}\left(a^{\varepsilon}_h\cdot \nabla e^{\varepsilon}_h\right) \varphi_h dx = 0
 \eeq
 with
 \begin{equation}
  \begin{aligned}
   A^{\varepsilon}_h =& \int_0^1D^2f_{\varepsilon}(\nabla u^{\varepsilon}+t\nabla(u^{\varepsilon}_h-u^{\varepsilon}))dt \\
   a^{\varepsilon}_h=& \frac{1}{k}\int_0^1f_{\varepsilon}(\nabla u^{\varepsilon}+t\nabla(u^{\varepsilon}_h-
   u^{\varepsilon}))^{\frac{1}{k}-1}
   Df_{\varepsilon}(\nabla u^{\varepsilon}+t\nabla(u^{\varepsilon}_h-u^{\varepsilon}))
   dt \\
   e^{\varepsilon}_h =& u^{\varepsilon}_h-u^{\varepsilon}
  \end{aligned}
 \end{equation}
and for later purposes we set  
\begin{equation}
  \begin{aligned}
   \bar A^{\varepsilon}_h =& D^2f_{\varepsilon}(\nabla u^{\varepsilon}) \\
   \bar a^{\varepsilon}_h=& \frac{1}{k}f_{\varepsilon}(\nabla u^{\varepsilon})^{\frac{1}{k}-1}
   Df_{\varepsilon}(\nabla u^{\varepsilon}).
  \end{aligned}
 \end{equation}

We define $\varphi \in W^{1,p^{*}}_0(\Omega)$ by
\beq \label{73}
L^{*}_{\varepsilon}\varphi = |e^{\varepsilon}_h|^{p-1}\sgn (e^{\varepsilon}_h)
\eeq
and let $\varphi_h \in V_h$ be the finite element solution of this equation. 
We test (\ref{73}) with $e^{\varepsilon}_h$ and get in view of the symmetry of 
$\bar A^{\varepsilon}_h$ that
\begin{equation} \label{7001}
 \begin{aligned}
  \int_{\Omega}|e^{\varepsilon}_h|^p dx=&
   \int_{\Omega}(A^{\varepsilon}_h\nabla e^{\varepsilon}_h) \cdot \nabla \varphi_h dx + 
 \int_{\Omega}(a^{\varepsilon}_h\cdot \nabla e^{\varepsilon}_h) \varphi_h dx \\
 \stackrel{(\ref{72})}{=}& \int_{\Omega}\left((A^{\varepsilon}_h- \bar A^{\varepsilon}_h)\nabla e^{\varepsilon}_h \right)\cdot \nabla \varphi_h dx + 
 \int_{\Omega}\left((a^{\varepsilon}_h-\bar a^{\varepsilon}_h)\cdot \nabla e^{\varepsilon}_h\right) \varphi_h dx \\
 \le& c\int_{\Omega}|\nabla e^{\varepsilon}_h|^2|\nabla \varphi_h|dx + c \int_{\Omega}|\nabla e^{\varepsilon}_h|^2
 \varphi_h dx \\
 \le& c\|\varphi_h\|_{W^{1,p^{*}}(\Omega)} \|e^{\varepsilon}_h\|^2_{W^{1,2p}(\Omega)}.
 \end{aligned}
\end{equation}
In view of Corollary \ref{5004} and (\ref{73}) we get
\begin{equation}
\begin{aligned}
 \|\varphi_h\|_{W^{1,p^{*}}(\Omega)} \le & c \left(\int_{\Omega}|e^{\varepsilon}_h|^{(p-1)p^{*}}
 dx\right)^{\frac{1}{p^{*}}}
 =& c\|e^{\varepsilon}_h\|^{\frac{p}{p^{*}}}_{L^p(\Omega)},
\end{aligned}
\end{equation}
so that
\beq
\|e^{\varepsilon}_h\|_{L^p(\Omega)} \le c h^2.
\eeq
Here, we used (\ref{7001}) and Theorem \ref{60} to estimate 
$\|e^{\varepsilon}_h\|_{W^{1,2p}(\Omega)}$. 
\end{proof}

In the following we validate the convergence rates of Theorem \ref{60} and Theorem~\ref{87}
with a numerical example.
Figure \ref{Fig1} shows the discretization error in the case of a unit circle as initial curve and 
$\varepsilon=0.1$ fixed. The discrete solutions for different values of the 
discretization parameter $h$ are compared with the discrete solution $\tilde u^{\varepsilon}$ 
on a fine grid with grid size $h=0.005$. We compare on $\Omega_{0.005}$ and extend
$u_h^{\varepsilon}=0$, where it is not defined.
The discretization errors 
$\|\tilde u^{\varepsilon}-u^{\varepsilon}_h\|_{\Omega_h}$ for
$\|\cdot \|_{\Omega_h}=\|\cdot \|_{{L^2}(\Omega_h)}$, $\|\cdot \|_{\Omega_h}=\|\cdot \|_{{H^1(\Omega_h)}}$ and 
$\|\cdot \|_{\Omega_h}=
\|\cdot \|_{L^{\infty}(\Omega_h)}$ are plotted and behave as shown in Theorem \ref{87} and Theorem 
\ref{60}.

\begin{figure}[htb] 
    \centering
      {
        \includegraphics[width=0.47\textwidth]{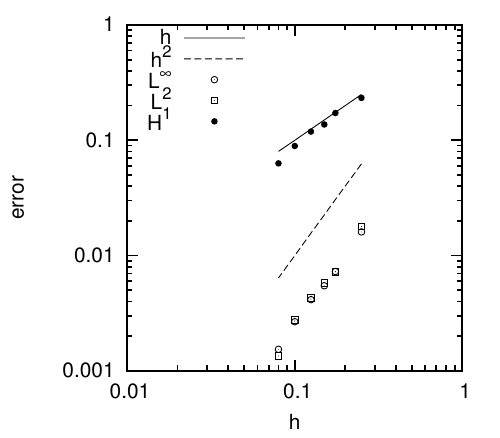}
        }
%

\caption{Discretization error for the unit circle as initial curve where $k=1$, $\varepsilon=0.1$.}
\label{Fig1}
\end{figure}


In Figure \ref{Fig8} we see that the discretization error close to the boundary has a pattern
which results from the approximation of the smooth domain $\Omega$ by the polygonal 
domain $\Omega_h$.
The discrete solution $u^{\epsilon}_h$ is zero at the polygonal boundary $\partial \Omega_h$
while $u^{\epsilon}$ is zero on the curved boundary $\partial \Omega$.

\begin{figure}
  \centering
 \includegraphics[width=7cm]{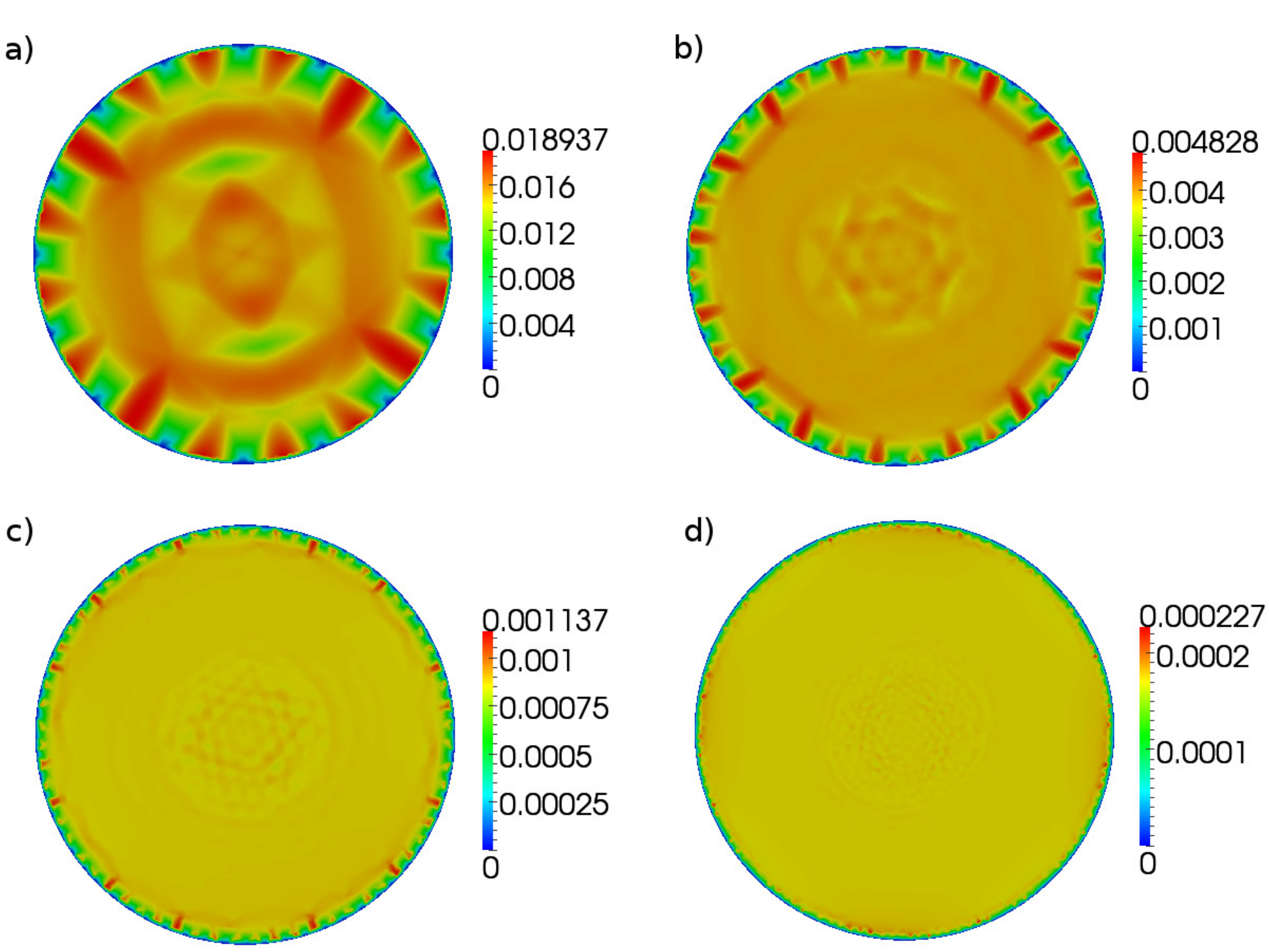}
 \caption{Discretization error for the unit circle as initial curve (we compare with a discrete
 solution on a fine grid with size $h_0$). The pictures show the difference 
 $u^{\varepsilon}_{h_0}-u^{\varepsilon}_{h}$  for $\varepsilon = 0.09$, $k = 1$, $h_0 = 0.025$ and  a) $h=0.4$, b) $h=0.2$, c) $h=0.1$, d) $h=0.05$.}
 \label{Fig8}
 \end{figure}
 
\section{Regularization error} \label{50}

We specify our a priori estimate for the regularization error presented in \cite{K} which depends on the choice of certain constants. Let
\beq \label{9}
\ga >1+k
\eeq
and $\al, s>0$ so that
\beq \label{10}
\be_1(\al, s) > \be_2(\al, s),
\eeq
where 
\begin{equation}  \label{23_}
\begin{aligned} 
\be_1(\al, s) :=\frac{2-s+\al(2-\frac{1}{k})}{\ga(2-\frac{1}{k})+\frac{1}{k}-1}, \quad 
\be_2(\al, s):=\frac{\al+ks}{\ga-k-1}
\end{aligned} 
\end{equation}
and choose 
\beq \label{714}
0<r<\frac{\al}{\ga}.
\eeq 
Note, that (\ref{10}) obviously holds for sufficiently small $\al, s$.
There holds the following theorem, cf. \cite[Theorem 3.1]{K}.
\begin{theorem} \label{24}
There is $c=c(k, \Omega)>0$ such that
\beq
\|u^{\varepsilon}-u\|_{C^0(\bar \Omega)} \le c \varepsilon^{\min(r,s)}
\eeq
for all $\varepsilon>0$.
\end{theorem}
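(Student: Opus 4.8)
The plan is to read Theorem~\ref{24} as a quantitative refinement of the qualitative convergence (\ref{15}): one already knows $u^{\ep}\to u$ in $C^0(\bar\Omega)$, and the task is to attach a rate to this. Since each $u^{\ep}$ is smooth with the uniform bound $\|u^{\ep}\|_{C^1(\bar\Omega)}\le c_0$ from (\ref{12}), while the limit $u$ is only Lipschitz, I would argue by a quantitative comparison: construct one-sided barriers of the form $u\pm c\,\ep^{\min(r,s)}$ (interpreted in the viscosity sense, using the definition in \cite[Section~2]{K}) and show that they are, respectively, a super- and a subsolution of the regularized problem (\ref{regularized_levelset_pmcf}) up to an admissible error, so that the comparison principle yields the two-sided bound. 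The divergence structure of (\ref{regularized_levelset_pmcf}) is what makes the weighted integral identities below available.

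The core is to control the two places where the regularized operator departs from the operator in (\ref{levelset_pmcf}). The diffusion flux $Dv/\sqrt{\ep^2+|Dv|^2}$ differs from $Dv/|Dv|$, and the source $(\ep^2+|Dv|^2)^{-1/2k}$ from $|Dv|^{-1/k}$, only in the degenerate regime where $|Dv|$ is comparable to $\ep$. I would test the difference of the two equations against a weight raised to the power $\ga$ and use the monotonicity of $p\mapsto p/\sqrt{\ep^2+|p|^2}$ to absorb the principal part; taking the $\ga$-th root then converts a bound of type $\|u^{\ep}-u\|^{\ga}\lesssim \ep^{\al}$ into the rate $\ep^{\al/\ga}$, which is why one may take any $r<\al/\ga$ as in (\ref{714}). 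The source mismatch is estimated separately and contributes the rate $s$. The exponents $\be_1,\be_2$ in (\ref{23_}) are precisely the rates at which the two degenerate contributions enter the weighted estimate, the hypothesis $\ga>1+k$ in (\ref{9}) makes the denominators in (\ref{23_}) positive, and the balancing condition $\be_1>\be_2$ in (\ref{10}) is exactly what is needed for the dominant term to control the error term so that the estimate closes.

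The main obstacle is the genuine degeneracy at $Dv=0$, where $Dv/|Dv|$ is singular and neither operator is uniformly elliptic: one must quantify exactly how far the solution can drift on the small-gradient set, and this delicate accounting is the origin of the somewhat technical exponents in (\ref{23_}). A secondary difficulty is upgrading the weighted (integral) estimate to the uniform bound in $C^0(\bar\Omega)$, which I would carry out through the barrier/comparison step rather than a Sobolev embedding, since no $\ep$-uniform bound beyond the $C^1$ estimate (\ref{12}) is at hand. Collecting the geometric contribution at rate $r$ and the source contribution at rate $s$ gives the combined rate $\ep^{\min(r,s)}$ with a constant depending only on $k$ and $\Omega$; the detailed computation is the one carried out in \cite[Theorem~3.1]{K}.
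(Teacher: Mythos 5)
The paper does not actually prove Theorem~\ref{24} here: it is quoted verbatim from \cite[Theorem~3.1]{K} with no argument given, and your sketch likewise ends by deferring ``the detailed computation'' to that same reference. So there is no in-paper proof to match against, and your proposal has to be judged on its own content. It identifies the right general framework (a quantitative comparison between $u^{\ep}$ and $u$ in the viscosity setting, with the difficulty concentrated where $|Du|$ is of order $\ep$), but the two concrete mechanisms you offer both break down.

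First, the barrier ansatz $u\pm c\,\ep^{\min(r,s)}$ is vacuous for this equation: both (\ref{levelset_pmcf}) and (\ref{regularized_levelset_pmcf}) depend on the unknown only through $Du$ and $D^2u$, so adding a constant changes neither the flux nor the source term; $u+C$ is a viscosity solution of exactly the same interior equation as $u$, and whether it is a supersolution of the regularized problem ``up to an admissible error'' is precisely the question of how far the two operators differ on $u$ --- which is not small in the degenerate region, since $|Du|^{-1/k}$ and $(\ep^2+|Du|^2)^{-1/2k}$ differ by an unbounded amount as $|Du|\to 0$. A constant shift provides no sign-definite term to absorb this; a workable barrier must perturb the gradient (a multiplicative scaling $(1+\la)u$, a dilation, or an additive term depending on $x$), and designing and balancing such perturbations is where the exponents $\al,s,\ga$ and the constraints (\ref{9}), (\ref{10}), (\ref{714}) actually originate --- your reading of $\be_1,\be_2$ as ``rates of the degenerate contributions'' is asserted, not derived. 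Second, the proposed weighted integral identity --- testing ``the difference of the two equations'' against a weight to the power $\ga$ --- is not available: $u$ is only Lipschitz and solves (\ref{levelset_pmcf}) only in the viscosity sense, so there is no distributional formulation of $\dive\left(Du/|Du|\right)=-|Du|^{-1/k}$ to test (the right-hand side need not even be integrable where $Du$ degenerates). Even granting such an estimate, it would yield an integral norm, and your plan to upgrade it to the $C^0(\bar\Omega)$ bound ``through the barrier/comparison step'' is circular, since that step was to rest on the integral estimate. In short, the proposal is a plausible narrative wrapped around the citation, but no step of it could be carried out as written.
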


The order of convergence stated in the previous theorem can be written more explicitly which is content of the 
following lemma.
\begin{corollary} \label{76}
 For the evolution with normal speed $H^k$, $k \ge 1$, the regularization error with respect 
 to the $C^0$-norm is of order 
 $O(\varepsilon^{\frac{1}{\la}})$ for all $\la > 2k$.
\end{corollary}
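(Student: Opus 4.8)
The plan is to optimize the exponent $\min(r,s)$ appearing in Theorem~\ref{24} over all admissible parameters subject to the constraints (\ref{9}), (\ref{10}) and (\ref{714}). Since (\ref{714}) permits $r$ to be chosen arbitrarily close to $\frac{\alpha}{\gamma}$ from below, the effective ceiling for the $r$-contribution is $\frac{\alpha}{\gamma}$, so the achievable rate equals the supremum of $\min\!\left(\frac{\alpha}{\gamma},\,s\right)$ over all $(\gamma,\alpha,s)$ with $\gamma>1+k$ and $\beta_1(\alpha,s)>\beta_2(\alpha,s)$.

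First I would balance the two contributions by setting $\alpha=\gamma s$, so that $\frac{\alpha}{\gamma}=s$ and $\min\!\left(\frac{\alpha}{\gamma},s\right)=s$; it then suffices to maximize $s$ alone. Substituting $\alpha=\gamma s$ into the definitions (\ref{23_}) and clearing the (positive) denominators, the inequality $\beta_1>\beta_2$ becomes a linear inequality in $s$,
\[
2(\gamma-k-1) > s\cdot C(k,\gamma),
\]
where $C(k,\gamma)$ collects the coefficient of $s$. The decisive step is to simplify $C(k,\gamma)$. Writing $m=2-\frac{1}{k}$, so that the denominator of $\beta_1$ is $m(\gamma-1)+1$ and $km=2k-1$, a direct expansion of $(\gamma+k)[m(\gamma-1)+1]-(\gamma m-1)(\gamma-k-1)$ produces a cancellation leaving
\[
C(k,\gamma)=2k(2\gamma-1),
\]
so the admissibility condition collapses to the clean bound
\[
s<\frac{\gamma-k-1}{k(2\gamma-1)}.
\]

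Next I would maximize the right-hand side over $\gamma>1+k$. Setting $g(\gamma):=\frac{\gamma-(k+1)}{2\gamma-1}$, a short computation gives $g'(\gamma)=\frac{2k+1}{(2\gamma-1)^2}>0$, so $g$ is strictly increasing with $g(\gamma)\to\frac{1}{2}$ as $\gamma\to\infty$. Hence the supremum of the admissible values of $s$ is $\frac{1}{2k}$, approached only in the limit $\gamma\to\infty$ and therefore never attained. Consequently, for every exponent strictly below $\frac{1}{2k}$---equivalently, every $\frac{1}{\lambda}$ with $\lambda>2k$---there are admissible parameters with $\min(r,s)\ge\frac{1}{\lambda}$, and Theorem~\ref{24} then yields the claimed rate $O(\varepsilon^{1/\lambda})$.

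I expect the algebraic simplification of $C(k,\gamma)$ to be the main obstacle: the expansion of $(\gamma+k)[m(\gamma-1)+1]-(\gamma m-1)(\gamma-k-1)$ generates several quadratic and mixed terms in $\gamma$ and $k$ that must combine, and it is precisely the identity $km=2k-1$ that collapses them to $2k(2\gamma-1)$. Once this cancellation is spotted, the balancing choice $\alpha=\gamma s$ and the monotonicity of $g$ make the remainder routine.
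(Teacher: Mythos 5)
Your proposal is correct and follows essentially the same route as the paper: both set $s=\alpha/\gamma$, reduce the admissibility condition $\beta_1>\beta_2$ to a linear inequality in $s$, and send $\gamma\to\infty$ to reach the threshold $s<\tfrac{1}{2k}$ (the paper obtains the limiting coefficient $4ks$ directly, while you first derive the closed form $C(k,\gamma)=2k(2\gamma-1)$ and note monotonicity in $\gamma$, a harmless refinement whose algebra checks out).
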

\begin{proof}
We rewrite the right-hand side of the estimate in Theorem \ref{24}. In (\ref{10}) we may assume that
$\al, s$ are related by $s=\frac{\al}{\ga}$.
We multiply (\ref{10}) by $\gamma$ and get
\beq
\frac{2-s+\al(2-\frac{1}{k})}{2-\frac{1}{k}+\frac{\frac{1}{k}-1}{\ga}} > \frac{\al+sk}{1-\frac{k+1}{\ga}}.
\eeq 
Now, we multiply with the denominator of the left-hand side and sort by $\al$ and $s$ on each side which 
leads to
\beq
2-s+\al\left(2-\frac{1}{k}\right) > \al \frac{2-\frac{1}{k}}{1-\frac{k+1}{\ga}}
+ s \frac{\frac{1}{k}-1+k\left(2-\frac{1}{k}+\frac{\frac{1}{k}-1}{\ga}\right)}{1-\frac{k+1}{\ga}}
\eeq
and after rearranging terms to
\begin{equation} \label{93}
\begin{aligned}
2  >& 
s\frac{\frac{1}{k}-\frac{k+1}{\ga}+k\left(2-\frac{1}{k}+\frac{\frac{1}{k}-1}{\ga}\right)
+(2-\frac{1}{k})(k+1)}{1-\frac{k+1}{\ga}}. 
\end{aligned}
\end{equation}
We may let $\ga$ tend to infinity without changing the value of $s$ (by adapting $\al$ correspondingly). Hence
the right-hand side of (\ref{93}) converges to $4ks$ as $\ga\rightarrow \infty$ and the claim follows.
\end{proof}

We recall an interpolation lemma, cf \cite[PDE II, Lemma 1.4.13]{CG}.

\begin{lemma}\label{interpolation}
For $0<\be<\al\le 1$ and a function $v: \Omega \rightarrow \mathbb{R}$ holds
\beq
[v]_{\be} \le 2^{1-\frac{\be}{\al}}[v]_{\al}^{\frac{\be}{\al}}\|v\|_{C^0(\Omega)},
\eeq
where these expressions might become $\infty$ and 
\beq
[v]_{\al} = \sup_{x \neq y}\frac{|v(x)-v(y)|}{|x-y|^{\al}}.
\eeq
\end{lemma}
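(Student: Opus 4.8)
The plan is to prove the estimate pointwise in the supremum that defines $[v]_\beta$ and then pass to the supremum. If $[v]_\alpha=\infty$ or $\|v\|_{C^0(\Omega)}=\infty$ the asserted inequality is trivially true, so I assume both quantities finite (in particular $v$ is then bounded and $\alpha$-Hölder). Fixing $x\neq y$ in $\Omega$, I record the two elementary estimates of the increment: from the definition of the $\alpha$-seminorm, $|v(x)-v(y)|\le [v]_\alpha\,|x-y|^{\alpha}$, and from the triangle inequality, $|v(x)-v(y)|\le 2\|v\|_{C^0(\Omega)}$.

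The heart of the argument is to interpolate these two bounds. Since $0<\beta<\alpha\le 1$, the ratio $\theta:=\beta/\alpha$ lies in $(0,1)$, and I write the $\beta$-difference quotient as a weighted geometric mean of the $\alpha$-difference quotient (with weight $\theta$) and the bare increment $|v(x)-v(y)|$ (with weight $1-\theta$):
\[
\frac{|v(x)-v(y)|}{|x-y|^{\beta}}=\left(\frac{|v(x)-v(y)|}{|x-y|^{\alpha}}\right)^{\theta}\bigl(|v(x)-v(y)|\bigr)^{1-\theta}.
\]
One checks that the powers of $|x-y|$ on the right recombine to $|x-y|^{-\alpha\theta}=|x-y|^{-\beta}$ and the powers of $|v(x)-v(y)|$ recombine to the first power, so this is an exact identity. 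I then insert the two elementary estimates: the first factor is bounded by $[v]_\alpha^{\theta}=[v]_\alpha^{\beta/\alpha}$, and in the second factor I estimate the increment by $2\|v\|_{C^0(\Omega)}$. Because the second factor carries the weight $1-\theta$, the constant $2$ enters as $2^{1-\theta}=2^{1-\beta/\alpha}$, yielding the pointwise bound by $2^{1-\beta/\alpha}[v]_\alpha^{\beta/\alpha}\|v\|_{C^0(\Omega)}$. Taking the supremum over $x\neq y$ on the left turns it into $[v]_\beta$ and produces exactly the claimed inequality.

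There is essentially no analytic obstacle here; the work is purely algebraic bookkeeping, and the main point requiring care is that the geometric-mean splitting above is a genuine identity, so that no residual power of $|x-y|$ survives and the exponent on $[v]_\alpha$ comes out as $\beta/\alpha$ with the constant $2^{1-\beta/\alpha}$. This is where the hypothesis $\beta<\alpha$ enters, guaranteeing $\theta\in(0,1)$ so that both interpolation weights are admissible. The degenerate cases — $v$ constant, or one of the two norms infinite — must also be recorded so that the passage to the supremum is legitimate, but each is immediate. No regularity or boundedness of $\Omega$ is needed, since the estimate is entirely pointwise and rests only on the definitions together with the triangle inequality.
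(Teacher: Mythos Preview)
The paper does not give its own proof of this lemma; it merely cites Gerhardt's lecture notes \cite{CG}. So there is nothing to compare against, and your argument is exactly the standard one.

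That said, there is a bookkeeping slip in your final step, and it actually reveals a typo in the lemma as stated. You correctly bound the second factor by $(2\|v\|_{C^0(\Omega)})^{1-\theta}$ and you correctly note that the constant $2$ therefore appears as $2^{1-\theta}$; but the norm $\|v\|_{C^0(\Omega)}$ must then carry the \emph{same} exponent $1-\theta=1-\beta/\alpha$. What your computation really yields is
\[
[v]_\beta \le 2^{1-\beta/\alpha}\,[v]_\alpha^{\beta/\alpha}\,\|v\|_{C^0(\Omega)}^{\,1-\beta/\alpha},
\]
which is the standard interpolation inequality. The version printed in the paper (and repeated in your conclusion), with $\|v\|_{C^0(\Omega)}$ to the first power, is not homogeneous: replacing $v$ by $\lambda v$ scales the left side like $\lambda$ and the right side like $\lambda^{1+\beta/\alpha}$, so the inequality fails for small $\lambda$ whenever $[v]_\beta>0$. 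Your argument is fine; just restore the missing exponent on $\|v\|_{C^0(\Omega)}$.
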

Since $u^{\varepsilon}$ is uniformly bounded in the $C^1$-norm, cf. (\ref{12}), we can use Lemma \ref{interpolation} to get
\beq
\|u-u^{\varepsilon}\|_{C^{0,\beta}(\Omega)} \le c (\be) \varepsilon^{\la(1-\be)}
\eeq
for every $0<\be<1$ and $0<\la<\frac{1}{2k}$.

In the case $k=1$ which means mean curvature flow we can realize in Corollary \ref{76} every power of 
$\varepsilon$ which lies in $(0, \frac{1}{2})$. 
This is in accordance with the corresponding rate for the case of the 
time dependent 
level set regularization as considered in 
Deckelnick's paper \cite[Theorem 1.2]{D} and Mitake's paper \cite[Theorem 1]{M}.

The following numerical examples indicate that the regularization error is even smaller than stated in 
Corollary 
\ref{76}.
As a first example we calculate the regularization error in the case of the 
evolution of a unit circle as initial curve for which the exact solution of 
equation (\ref{levelset_pmcf}) is known. Let $\partial B_{r_0}(0)\subset \mathbb{R}^2$, 
i.e. a circle with radius $r_0>0$, be the initial curve then the exact solution $u$ is given as 
\begin{equation}
u(r) = \frac{r_0^{k+1}-r^{k+1}}{k+1},
\end{equation} 
where $r$ denotes the radius variable in polar coordinates in $\mathbb{R}^2$ with center in $0$. 
As special case we choose $k=1$, $r_0=1$, i.e. 
\begin{equation}
u(r) = \frac{1}{2}-\frac{r^2}{2}.
\end{equation}
In Figure \ref{Fig2} the error $\|u^{\varepsilon}-u\|$ is plotted in this special case 
(and for $k=1.5$ and $k=2$), where $\|\cdot \|$ stands 
for $\|\cdot \|=\|\cdot \|_{L^2(\Omega_h)}$, 
$\|\cdot \|=\|\cdot \|_{H^1(\Omega_h)}$ and $\|\cdot \|=\|\cdot \|_{L^{\infty}(\Omega_h)}$. 
We remark that our theoretical estimate in Corollary \ref{76} does not provide information about an estimate
with respect to $\|\cdot \|_{H^1(\Omega_h)}$.
The functions $u^{\varepsilon}$ are calculated by using linear finite elements on a 
 fine grid with mesh size $h=0.0125$. The $L^2$-error converges a little bit faster and the $H^1$-error a little bit slower 
than of quadratic order to zero. 

\begin{figure}
  \begin{center}
 \includegraphics[width=6.23cm]{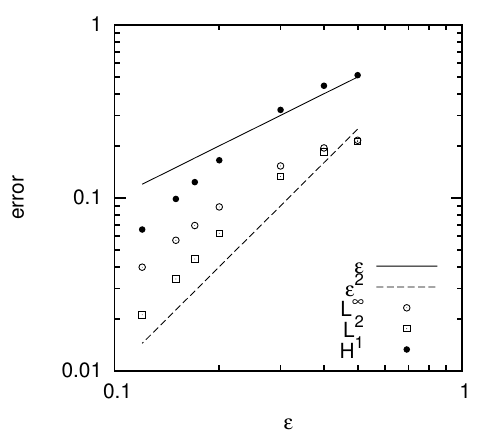}
 \includegraphics[width=6.23cm]{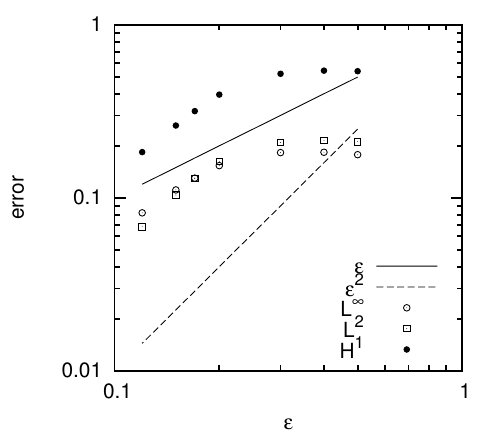}
 \includegraphics[width=6.23cm]{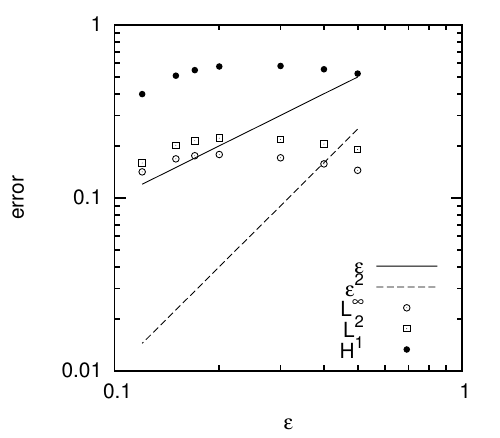}
 \caption{Regularization error in case of a circle as initial curve for $k=1, 1.5, 2$.}
 \label{Fig2}
 \end{center}
 \end{figure}
 
In Figure \ref{Fig3} the scenario is the same as in Figure \ref{Fig2} apart from the fact that we now choose 
an ellipse (half axes lengths 1 and 2) as initial curve. Furthermore since we do not have an exact solution 
$u$ for this case we use instead a solution $u^{\varepsilon}_h$ with
  $\varepsilon=0.1$ and small $h=0.0125$. 
  In Figure \ref{Fig4} we plot a section (along the long and short half axes of the initial curve)
  of the solution $u^{\varepsilon}$ in the case 
  of the circle and in Figure \ref{Fig5} in the case of an ellipse as initial curve
  for different values of $\varepsilon$.
  
 In accordance with our a priori estimate in Corollary \ref{76} the regularization error in Figure \ref{Fig2} seems 
 to become larger for increasing $k$. This can be also seen from Figure~\ref{Fig4}. There we also 
observe that the approximation quality around the singularity of the flow deteriorates for $k=2$
when changing $\varepsilon$ from $0.5$ to $0.17$.

\begin{figure}
  \begin{center}
 \includegraphics[width=6.23cm]{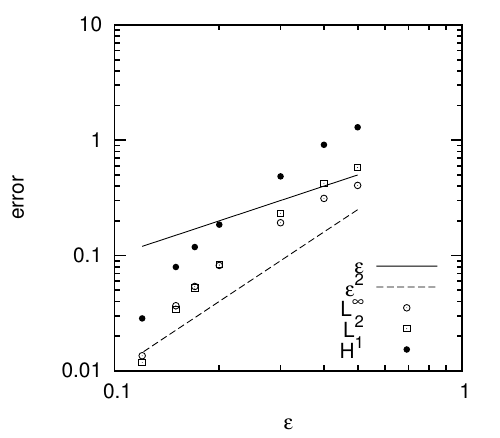}
 \includegraphics[width=6.23cm]{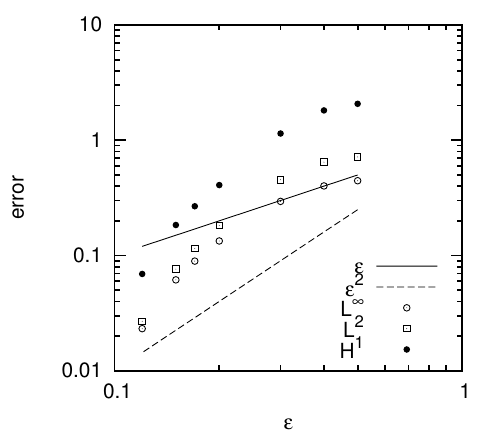}
 \includegraphics[width=6.23cm]{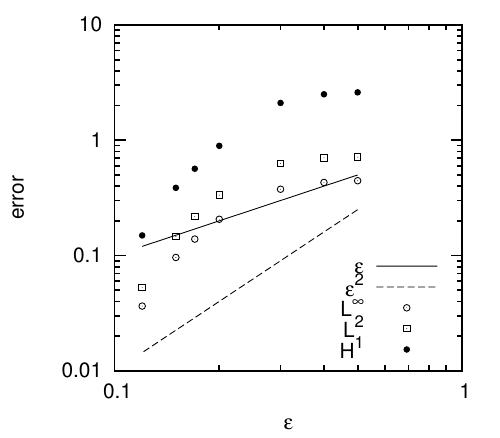}
 \end{center}
 \caption{Regularization error in case of an ellipse as initial curve for $k=1, 1.5, 2$.}
 \label{Fig3}
 \end{figure}

\begin{figure}
  \begin{center}
 \includegraphics[width=6.23cm]{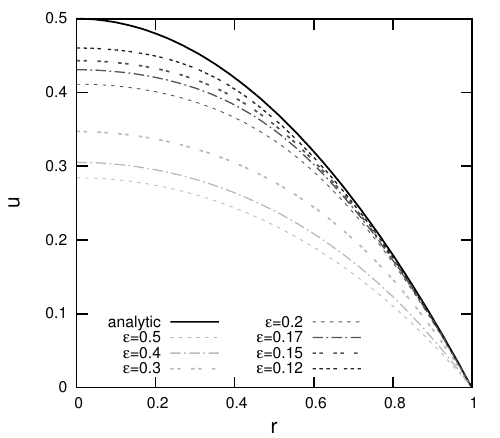}
 \includegraphics[width=6.23cm]{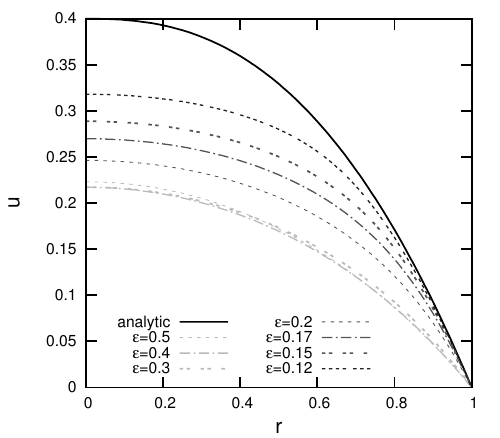}
 \includegraphics[width=6.23cm]{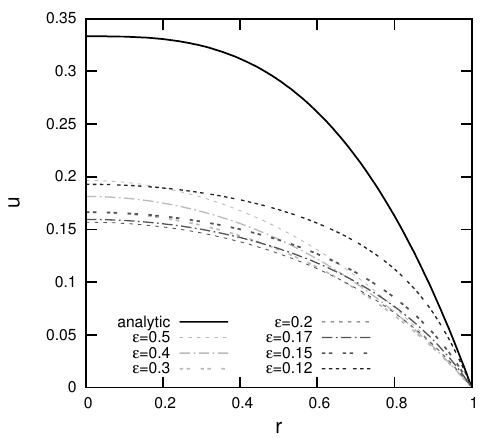}
 \caption{Radial solution for a circle as initial curve for $k=1, 1.5, 2$.}
 \label{Fig4}
 \end{center}
 \end{figure}

\begin{figure}
\begin{center}
\includegraphics[width=6.23cm]{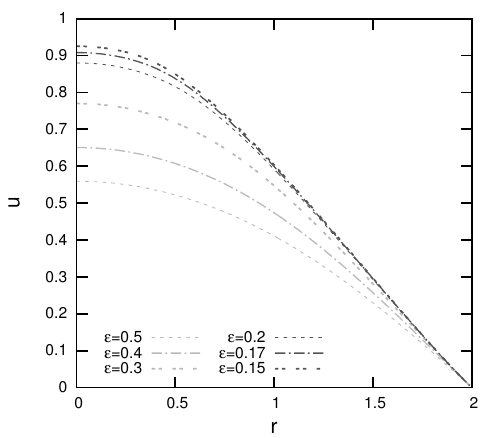}
 \includegraphics[width=6.23cm]{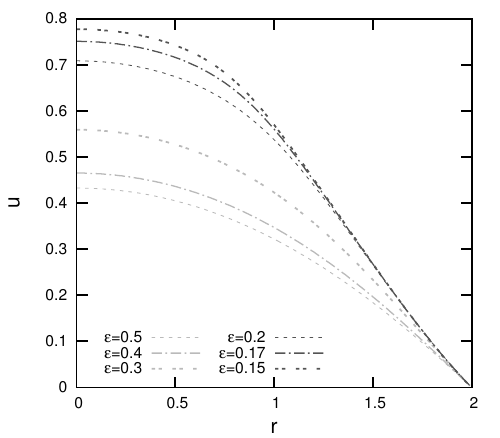}
 \includegraphics[width=6.23cm]{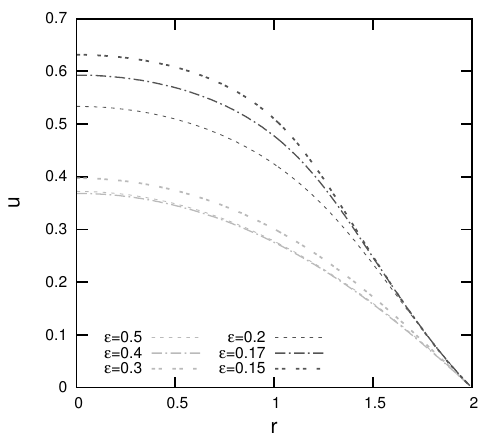}\\
  \includegraphics[width=6.23cm]{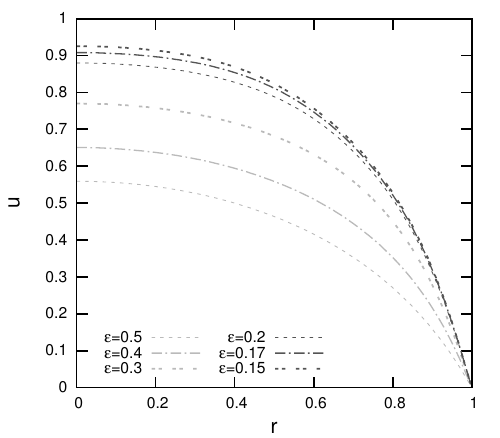}
 \includegraphics[width=6.23cm]{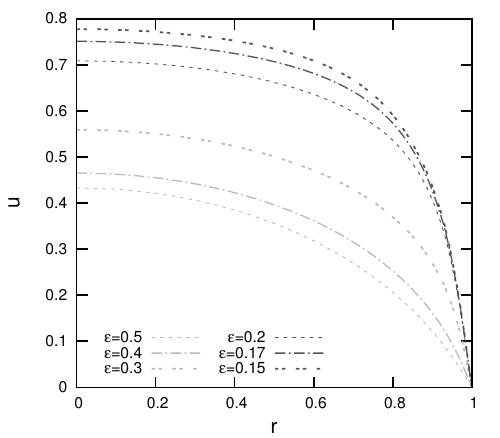}
 \includegraphics[width=6.23cm]{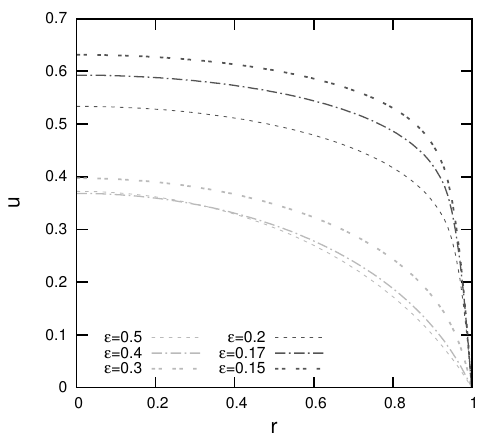}
 \caption{Solution for ellipse as initial curve. Picture 1--3: Section in direction of the long half axis of the initial curve
 for $k=1, 1.5, 2$; 
 Picture 4--6: the same for the short half axis.}
 \label{Fig5}
 \end{center}
 \end{figure}

Figure \ref{Fig9} shows level sets of $u^{0.1}$ for the case of the ellipse as initial curve and different values of 
$k$. We remark that our theory covers only the case $k\ge 1$ but in the special case of convex curves we also have a 
level set solution for general $k>\frac{1}{3}$ which follows from the classification
of the behavior of the evolution of curves by powers of the curvatures presented in Section 1 of \cite{A03}. 
Our observations are as follows. For $k=0.5$ we see for $\varepsilon=0.1$ a quite good approximation
of the phenomenon of shrinking to a 'round point' and further lessening of $\varepsilon$ does not show significant 
improvements. For all $k$ the inner level line for $\varepsilon=0.1$ seems to be already 'round' while for $k=2$ this seems 
to be  
far from a 'point'.
 
\begin{figure}
  \centering
 \includegraphics[width=6cm]{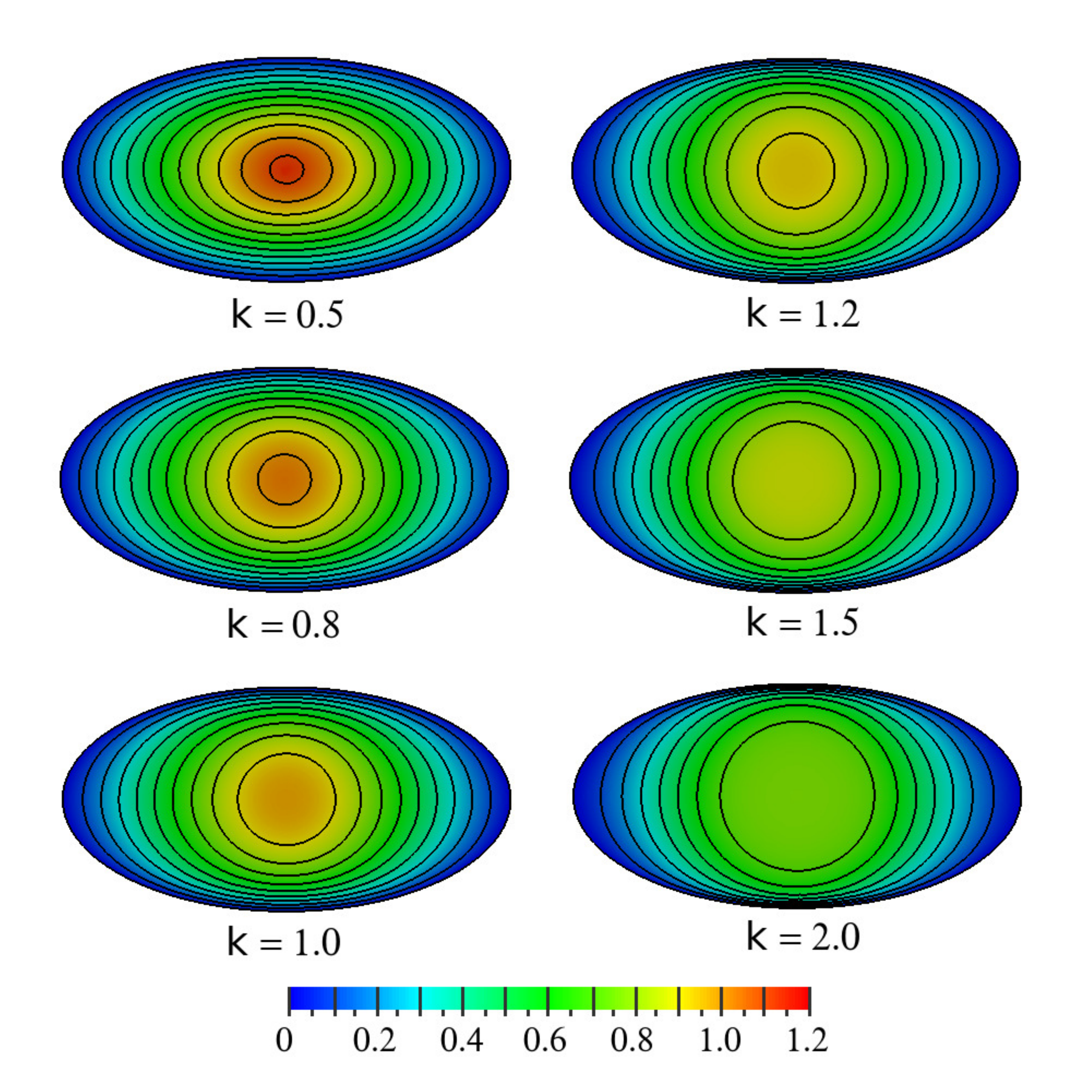}
 \caption{Solution for ellipse for $\varepsilon=0.1$.}
 \label{Fig9}
 \end{figure}

\section{Total approximation error} \label{52}
In the inequalities (\ref{1002}) and (\ref{88}) appear constants $c$ on the right-hand
sides  which depend on the solution $u^{\varepsilon}$ of the regularized equation. 
To get an estimate for $u-u^{\varepsilon}_h$ in terms of $\varepsilon$ and $h$ one has to make this dependence explicit. 
In our paper \cite{K} we showed that there is a $\ga>0$ such that if we couple $h$ and $\varepsilon$ by 
$h=\varepsilon^{\ga}
$ and use finite elements of order 2 (and quadratic boundary approximations), then there holds that the error $u-u^{\varepsilon}_h$ converges to zero with a polynomial
 rate in $h$ with respect to the sup-norm. To estimate with respect to the sup-norm is natural since $u$ is (only, in 
 general,) a $C^0$-limit of $u^{\varepsilon}$ and (as viscosity solution)
 Lipschitz continuous. A value for $\ga$
 and the convergence rate
 can be obtained by adapting it at each stage of the proofs in \cite{K} as described there which leads to
 a rather technical large value of no practical interest.  The main point is that we have a 
 polynomial rate and not an exponential rate. As said before and explained by comparing the
 situation with \cite[Theorem 6.4]{DDE}, where the authors 
 proved even only an exponential estimate, this is non-trivial. We let us furthermore inspire from the
 scenario of \cite[Theorem 6.4]{DDE} which overestimates the error rate as practical results indicate, 
 cf. \cite{DDE}.
 Therefore we start our calculations with the 
 from practical point of view comfortable setting of continuous and piecewise linear 
 finite elements, a polygonal boundary 
 approximation and a coupling between $\varepsilon$ and $h$ by setting $\varepsilon = h$ which already lead to convergence.
 
 Figure \ref{Fig6} shows the total approximation errors for the unit circle as initial curve in the cases $k=1, 1.5, 2$. Although
 we have only for the sup-norm a theoretical estimate we also plot the $H^1$-error; we remark
 that in the 
 situation of the circle the solution is of class $C^{\infty}(\bar B_{r_0}(0))$.
 Figure \ref{Fig7} shows the same scenario as Figure \ref{Fig6} apart from the fact that we now consider the ellipse 
(half axes with lengths 1 and 2) as initial curve.
Furthermore, as reference solution we consider a solution with $h=0.05$
and $\varepsilon=0.05$.

\begin{figure}
\begin{center}
 \includegraphics[width=6.23cm]{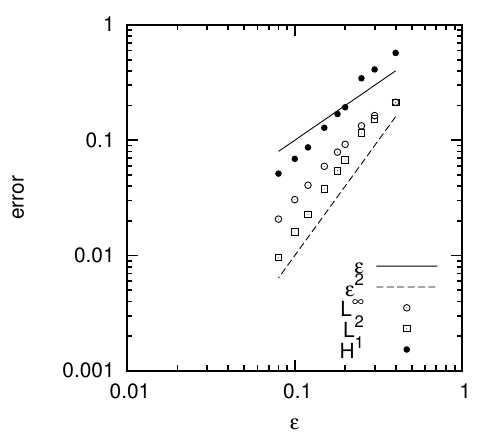}
 \includegraphics[width=6.23cm]{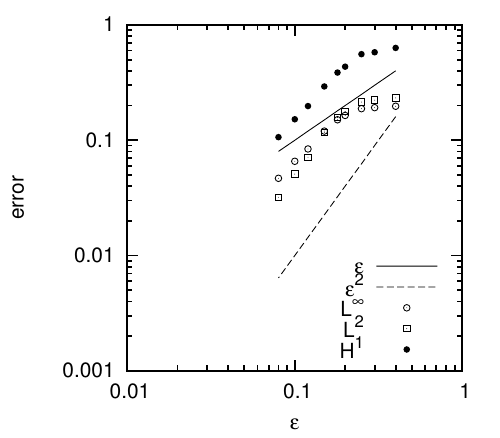}
 \includegraphics[width=6.23cm]{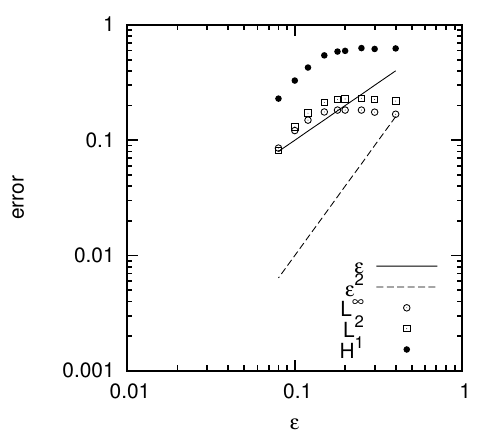}
 \caption{Total approximation error, $\varepsilon = h$, for $k=1, 1.5, 2$, in case of a circle as initial curve.}
 \label{Fig6}
 \end{center}
 \end{figure}
 
\begin{figure}
\begin{center}
 \includegraphics[width=6.23cm]{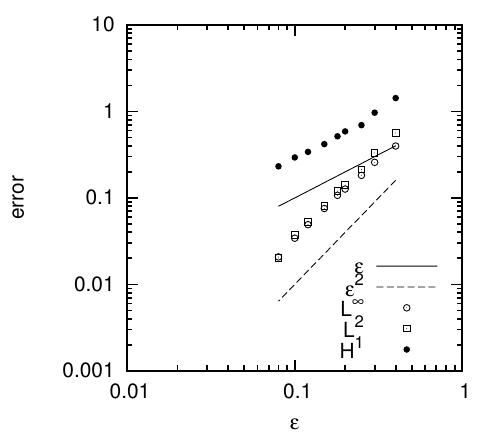}
 \includegraphics[width=6.23cm]{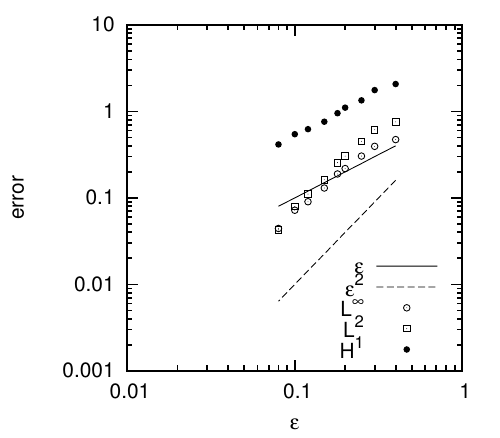}
 \includegraphics[width=6.23cm]{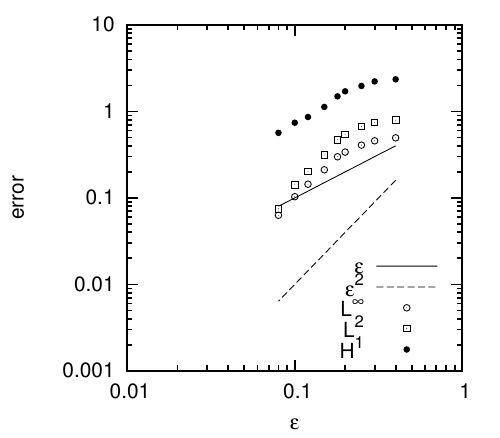}
 \caption{Total approximation error, $\varepsilon = h$, for $k=1, 1.5, 2$ in case of ellipse with half axes 1 and 2 as initial curve.}
 \label{Fig7}
 \end{center}
 \end{figure}
 
\section{Effect of k on behavior of the flow for an example case} \label{70}
The phenomenon of becoming round can be measured by the isoperimetrical deficit
\beq
l(t)^2 - 4 \pi a(t), 
\eeq
where $l(t)$ denotes the length of the curve and $a(t)$ the enclosed area at time $t$. According to theoretical results
in \cite{S} we confirm the monotonicity of this deficit during the evolution in the special case of the 
ellipse as initial curve, see Figure \ref{Fig10}. Furthermore, we see that with increasing $k$ (and $\varepsilon=0.05$) the 
curves transform faster into a circle (they are not yet shrinked to a point except for $k=0.5$, see Figure \ref{Fig9},
where the 'point' is reached quite well). In Figure \ref{Fig4} and Figure \ref{Fig5} we see 
when comparing the exact solutions for the circle for different values of $k$ and the approximate 
solutions for the ellipse with $\epsilon=0.15$ for different values of $k$, respectively, that the flow reaches the 
singularity for larger $k$ earlier.

\begin{figure}
  \centering
 \includegraphics[width=6cm]{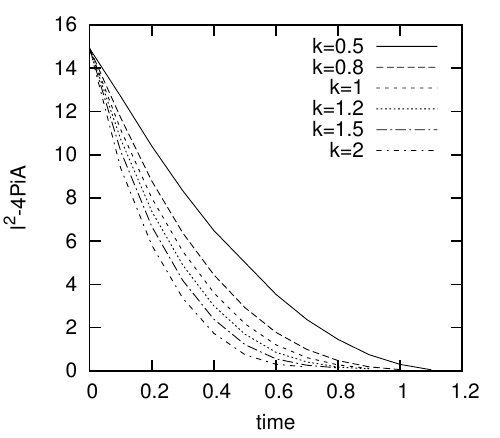}
 \caption{Isoperimetrical deficit.}
 \label{Fig10}
 \end{figure}
 
\section{Implementation}
To calculate the finite element approximation $u^{\varepsilon}_h$ of $u^{\varepsilon}$ we used a discretization
with unstructured grids, see Figure  \ref{Fig11}. These were generated by the mesh generator Gmsh, see \cite{Gmsh}. 
We solved the non-linear equation (\ref{FE_levelset_pmcf}) with a Newton method which uses a 
bi-conjugate gradient stabilized solver (BiCGSTAB) and SSOR preconditioning. 
For the implementation we used PDELab, a discretization module for solving PDEs which depends on the Distributed
and Unified Numerics Environment (DUNE). As further references concerning PDELab we refer to \cite{PDELab, Bastian3}, 
information 
about DUNE can be found in \cite{Blatt, Bastian1, Bastian2, DUNE}.
In order to get solutions for small $\varepsilon$ we used a warm-start, i.e. we decreased $\varepsilon$ stepwise to the desired
small value and performed on each stage a calculation with the solution for the previous $\varepsilon$
as initial value.

\begin{figure}
  \centering
 \includegraphics[width=6cm]{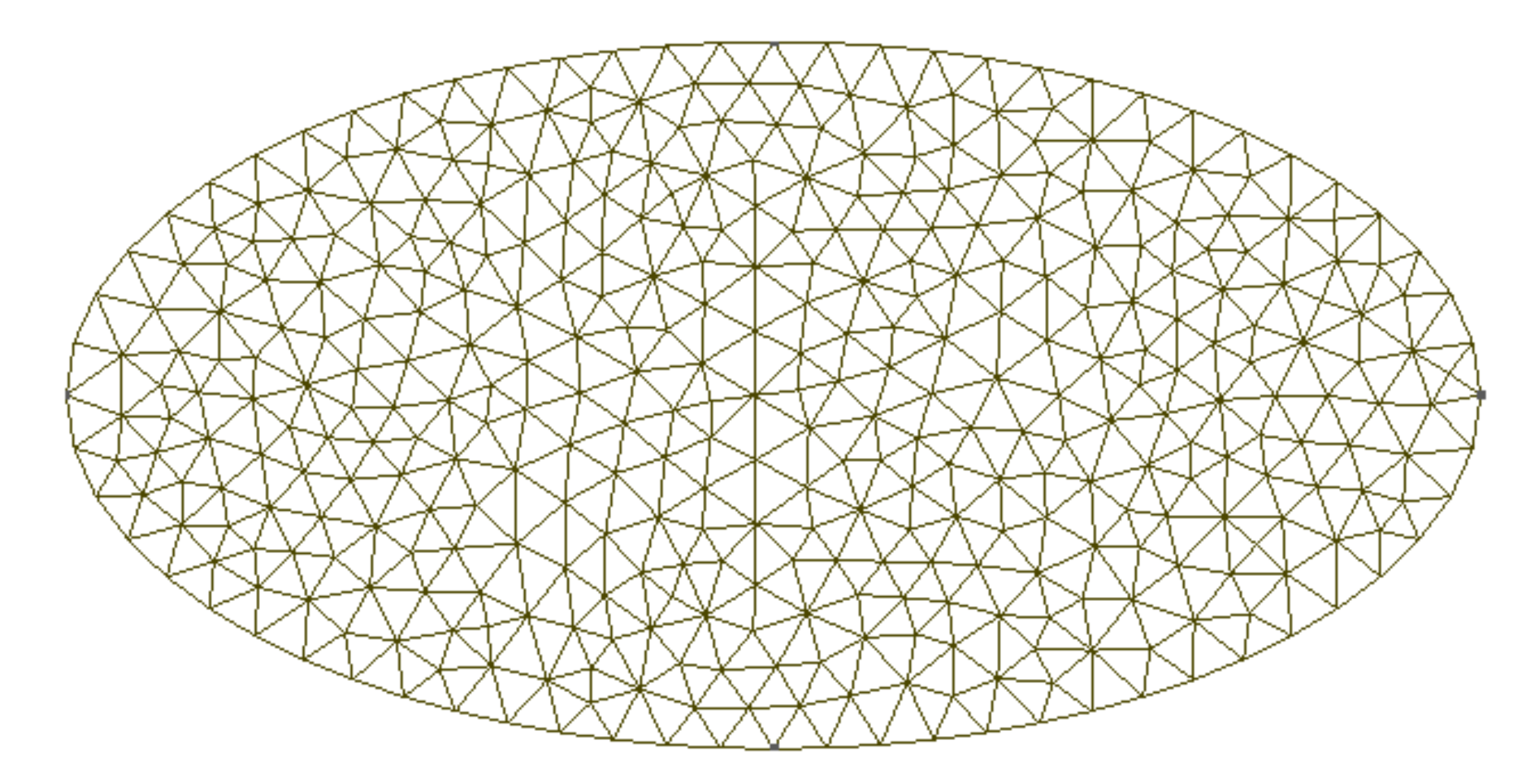}
 \caption{Mesh for the discretization with size $h=0.15$ for ellipse with half axes 1 and 2.}
 \label{Fig11}
 \end{figure}
 \section{Appendix}
 Since $L_{\epsilon}: H^1_0(\Omega)\rightarrow H^{-1}(\Omega)$ is a topological isomorphism by classical $L^2$-theory this
also holds for $L_{\epsilon}^{*}: H^1_0(\Omega) \rightarrow H^{-1}(\Omega)$.
We define the to $L_{\epsilon}$ associated uniformly, elliptic regular Dirichlet bilinear form of order 1 by
\beq
B: W^{1,p}_0(\Omega)\times W^{1, p^{*}}_0(\Omega)\rightarrow \mathbb{R}, 
\quad B[u,v]=\int_{\Omega}a^{ij}D_iuD_jv + b^iD_iuv \ dx
\eeq
and set
\begin{equation}
\begin{aligned}
N_{p^{*}} =& \{v \in W^{1,p^{*}}_0(\Omega): B[\psi, v]=0 \text{ for every }\psi \in C^{\infty}_0(\Omega)\}\\
N_{p} =& \{v \in W^{1,p}_0(\Omega): B[v, \phi]=0 \text{ for every }\phi \in C^{\infty}_0(\Omega)\}
\end{aligned}
\end{equation}
From Fredholm's alternative, cf. \cite[Theorem 10.7]{Simader1972}, we deduce that for every $F \in W^{-1,p^{*}}(\Omega)$
the equation
\beq \label{5003}
B[u, \varphi] = F\varphi \quad\forall \varphi \in W^{1,p^{*}}_0(\Omega)
\eeq
has a solution $u\in W^{1,p}_0(\Omega)$ if and only if 
\beq \label{5001}
v\in N_{p^{*}} \Rightarrow Fv=0.
\eeq
If $\dim N_{p^{*}}=\dim N_{p}=0$ then for every $F\in W^{-1, p^{*}}(\Omega)$ equation (\ref{5003}) has a unique solution.
\begin{lemma}
$\dim N_{p^{*}}=\dim N_{p}=0$. 
\end{lemma}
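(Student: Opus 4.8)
The plan is to reduce both null spaces to genuine $H^1_0(\Omega)$-solutions of a homogeneous elliptic equation and then to kill them using the classical $L^2$-isomorphism of $L_{\epsilon}$ and $L_{\epsilon}^{*}$ recalled at the beginning of this appendix. Indeed, the defining conditions of $N_p$ and $N_{p^*}$ say exactly that $v$ is a weak interior solution of $L_{\epsilon}v=0$, respectively of the formal adjoint equation $L_{\epsilon}^{*}v=0$ (obtained by moving the derivatives off the test function in $B[\psi,v]$), together with zero boundary values built into the space $W^{1,q}_0(\Omega)$.

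First I would dispose of $N_p$. Since $p>n+1\ge 2$ and $\Omega$ is bounded, H\"older's inequality gives the embedding $W^{1,p}_0(\Omega)\hookrightarrow H^1_0(\Omega)$, so any $v\in N_p$ already lies in $H^1_0(\Omega)$. As the coefficients $a^{ij},b^i$ are bounded, the functional $B[v,\cdot]$ is continuous on $H^1_0(\Omega)$ and vanishes on the dense subspace $C^{\infty}_0(\Omega)$, hence on all of $H^1_0(\Omega)$; that is, $L_{\epsilon}v=0$ in $H^{-1}(\Omega)$. Since $L_{\epsilon}:H^1_0(\Omega)\to H^{-1}(\Omega)$ is an isomorphism, $v=0$, so $\dim N_p=0$.

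The harder case is $N_{p^*}$, where $p^{*}=\tfrac{p}{p-1}<2$ and the energy embedding into $H^1_0(\Omega)$ is no longer available; here I would invoke elliptic regularity. For $v\in N_{p^*}$ the function $v$ is a weak solution of the homogeneous divergence-form equation $L_{\epsilon}^{*}v=0$ with zero boundary values, and the operator $L_{\epsilon}^{*}$ is uniformly elliptic with coefficients in $C^{\infty}(\bar\Omega)$ on the smooth domain $\Omega$. Applying the $W^{2,q}$-theory of the Dirichlet problem valid for all $1<q<\infty$ (the framework of \cite{Simader1972}) with right-hand side $0$ yields $v\in W^{2,p^{*}}(\Omega)$, and the Sobolev embedding $W^{2,p^{*}}\hookrightarrow W^{1,q}$ improves the integrability of $\nabla v$; iterating this bootstrap finitely many times (the right-hand side remains $0$ at each stage, indeed $v$ is smooth up to the boundary) places $v$ in $W^{1,2}_0(\Omega)=H^1_0(\Omega)$. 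Then $v\in H^1_0(\Omega)$ solves $L_{\epsilon}^{*}v=0$, and the isomorphism property of $L_{\epsilon}^{*}:H^1_0(\Omega)\to H^{-1}(\Omega)$ forces $v=0$, giving $\dim N_{p^*}=0$. The main obstacle is precisely this sub-$L^2$ step: one cannot argue by a direct embedding and must instead rely on the $L^q$-regularity of the Dirichlet problem for $q<2$, after which the homogeneous right-hand side makes the bootstrap routine and the already-established $L^2$-isomorphism completes the argument.
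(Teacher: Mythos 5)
Your proof is correct and takes essentially the same route as the paper: upgrade the integrability of a null-space element until it lies in $H^1_0(\Omega)$ and then conclude $v=0$ from the uniqueness given by the classical $L^2$-theory. The only differences are minor --- you dispose of $N_p$ by the direct embedding $W^{1,p}_0(\Omega)\hookrightarrow H^1_0(\Omega)$ (available since $p>n+1\ge 2$) and phrase the upgrade for $N_{p^{*}}$ as a $W^{2,q}$ bootstrap, whereas the paper simply cites the $W^{1,q}$-regularity theorem \cite[Theorem 7.6]{Simader1972} for both null spaces, which is the more directly applicable reference since it acts on $W^{1,p^{*}}_0$ weak solutions of the homogeneous equation without first establishing second derivatives.
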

\begin{proof}
Let $v \in N_{p^{*}}$. From \cite[Theorem 7.6]{Simader1972} we get $v\in W^{1,p'}_0(\Omega)$ for all $1<p'<\infty$, especially
for $p'=2$. Since we know from $L^2$-theory that (\ref{5003}) has a unique solution $u \in W^{1,2}_0(\Omega)$ if $p=2$ and $F=0$ we deduce  that $v=0$. Analogously we obtain the remaining claim. 
\end{proof}

By bounded inverse theorem we conclude the following result.
\begin{corollary} \label{5004}
$L_{\epsilon}, L_{\epsilon} ^{*}$ are topological isomorphisms.
\end{corollary}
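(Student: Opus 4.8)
The final statement is Corollary \ref{5004}, asserting that $L_{\epsilon}$ and $L_{\epsilon}^{*}$ are topological isomorphisms between $W^{1,p}_0(\Omega)$ and $W^{-1,p^{*}}(\Omega)$ (respectively with the roles of $p$ and $p^{*}$ exchanged for the adjoint). The plan is to combine the Fredholm theory already set up in the Appendix with the triviality of the two null spaces established in the preceding lemma, and then invoke the bounded inverse theorem.

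**Approach.** I would proceed in three short steps. First, I recall that by the lemma immediately above, $\dim N_{p^{*}}=\dim N_{p}=0$. By the displayed consequence of Fredholm's alternative (the sentence ``If $\dim N_{p^{*}}=\dim N_{p}=0$ then for every $F\in W^{-1,p^{*}}(\Omega)$ equation (\ref{5003}) has a unique solution''), it follows immediately that the map induced by the bilinear form $B$, namely $u\mapsto B[u,\cdot]$, is a \emph{bijection} from $W^{1,p}_0(\Omega)$ onto $W^{-1,p^{*}}(\Omega)$. Since $B[u,v]=\langle L_{\epsilon}u,v\rangle$ by the very definition of $B$ and of $L_{\epsilon}$, this says precisely that $L_{\epsilon}:W^{1,p}_0(\Omega)\to W^{-1,p^{*}}(\Omega)$ is a continuous bijection.

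**Key steps.** Second, I observe that $L_{\epsilon}$ is bounded (continuous): the coefficients $a^{ij},b^i$ lie in $C^{\infty}(\bar\Omega)$ and are therefore bounded on $\bar\Omega$, so $|B[u,v]|\le c\,\|u\|_{W^{1,p}(\Omega)}\|v\|_{W^{1,p^{*}}(\Omega)}$ by H\"older's inequality, giving $\|L_{\epsilon}u\|_{W^{-1,p^{*}}(\Omega)}\le c\,\|u\|_{W^{1,p}(\Omega)}$. A continuous bijection between Banach spaces has a continuous inverse by the bounded inverse (open mapping) theorem, so $L_{\epsilon}$ is a topological isomorphism. Third, the statement for the adjoint $L_{\epsilon}^{*}$ follows by the symmetric roles of $N_p$ and $N_{p^{*}}$ in the lemma: running the same Fredholm argument with $B[\cdot,v]$ in place of $B[u,\cdot]$ shows $L_{\epsilon}^{*}:W^{1,p^{*}}_0(\Omega)\to W^{-1,p}(\Omega)$ is a continuous bijection and hence, again by the bounded inverse theorem, a topological isomorphism. (Alternatively, the adjoint of a topological isomorphism between reflexive Banach spaces is automatically a topological isomorphism.)

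**Main obstacle.** Given the scaffolding already in place, there is essentially no analytic difficulty remaining; all the real work — the Simader regularity theorem used to show $N_{p^{*}}\subset W^{1,2}_0(\Omega)$ and the $L^2$-uniqueness that forces the null spaces to vanish — is absorbed into the preceding lemma. The only point requiring a modicum of care is the bookkeeping of exponents and dual pairings: one must keep straight that $L_{\epsilon}$ maps the $p$-space to the $p^{*}$-dual while $L_{\epsilon}^{*}$ maps the $p^{*}$-space to the $p$-dual, and verify that the identity $B[u,v]=\langle L_{\epsilon}u,v\rangle$ together with $\tfrac1p+\tfrac1{p^{*}}=1$ makes both pairings well defined. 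Thus the corollary is a direct packaging of the lemma and the Fredholm statement via the open mapping theorem, and I would present it in just a few lines.
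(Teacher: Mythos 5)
Your proposal follows exactly the paper's route: the preceding lemma gives $\dim N_{p^{*}}=\dim N_{p}=0$, the Fredholm alternative then yields unique solvability of $B[u,\varphi]=F\varphi$ for every $F$ (i.e.\ bijectivity of $L_{\epsilon}$ and, symmetrically, of $L_{\epsilon}^{*}$), and the bounded inverse theorem upgrades the continuous bijections to topological isomorphisms. This matches the paper's own (one-line) proof, with your added details on boundedness of the bilinear form and the exponent bookkeeping being correct and harmless elaborations.
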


\section*{Acknowledgment}
We thank Klaus Deckelnick and Ulrich Matthes for a discussion on the question of 
dimensionality for the $L^p$-estimates of the finite element solution.

The work of this paper was partly carried out while the 
third author benefited from a Weierstrass postdoctoral fellowship of the Weierstrass 
Institute Berlin.

\end{document}